\documentclass[12pt,centertags,oneside]{amsart}
\usepackage[a4paper,margin=2.5cm]{geometry}
\usepackage[pagebackref]{hyperref}
\usepackage{amsmath,amssymb,amsfonts,amsthm}
\usepackage[abbrev]{amsrefs}
\usepackage[shortlabels]{enumitem}

\usepackage{mathrsfs,amsbsy,bm,mathtools,color}

\newtheorem{theorem}{Theorem}[section]

\newtheorem{prop}[theorem]{Proposition}

\newtheorem{lemma}[theorem]{Lemma}
\theoremstyle{definition}
\newtheorem{defn}[theorem]{Definition}
\newtheorem{definition}[theorem]{Definition}
\newtheorem{remark}[theorem]{Remark}

\newtheorem{example}[theorem]{Example}

\newcommand{\Q}{\mathbb{Q}}
\newcommand{\CC}{\mathbb{C}}
\newcommand{\R}{\mathbb{R}}
\newcommand{\Z}{\mathbb{Z}}

\newcommand{\lden}{\underline{\sf ud}}
\newcommand{\uden}{\overline{\sf ud}}

\newtheorem{cor}[theorem]{Corollary}

\renewcommand{\le}{\leqslant}
\renewcommand{\ge}{\geqslant}

\begin{document}

\title{Distribution modulo one of linear recurrent sequences}

\author{Zhangchi Chen} 
\address[Zhangchi Chen]{School of Mathematical Sciences, Key Laboratory of MEA (Ministry of Education) and Shanghai Key Laboratory of PMMP, East China Normal University, Shanghai 200241, China}
\email{zcchen@math.ecnu.edu.cn}

\author{Zihao Ye}
\address[Zihao Ye]{School of Mathematical Sciences, East China Normal University, Shanghai 200241, China}
\email{51265500041@stu.ecnu.edu.cn}

\author{Weizhe Zheng}
\address[Weizhe Zheng]{Morningside Center of Mathematics, Academy of Mathematics and Systems 
Science, Chinese Academy of Sciences, Beijing 100190, China; University of 
Chinese Academy of Sciences, Beijing 100049, China} \email{wzheng@math.ac.cn} 

\keywords{Distribution modulo one, Linear recurrent sequence, Fractional 
part, Weyl's equidistribution theorem, Pisot number} 

\subjclass{Primary 11J71; Secondary 11J54, 11K16, 11R06}

\begin{abstract}
We study the distribution modulo one of linear recurrent sequences of real 
numbers. We prove criteria for the finiteness of the set of limit values of 
the fractional parts of such a sequence and give lower bounds for the 
maximal distance between two limit values. Our results generalize theorems 
of Flatto, Lagarias, Pollington, and Dubickas. 
\end{abstract}
\maketitle

\section{Introduction}
Distribution modulo one of sequences of real numbers has been intensively 
studied since Weyl \cite{Weyl}. Among other results, Weyl showed that for any 
polynomial $F(x)\in \R[x]$ such that $F(x)-F(0)\notin \Q[x]$, the sequence 
$(F(k))_{k\ge 1}$ is uniformly distributed modulo one \cite[Satz~12]{Weyl}. 
In another direction, the distribution of $(\{\xi \alpha^k\})_{k\ge 1}$ for 
$\xi,\alpha\in \R$ was studied by Hardy \cite{Hardy}, Pisot \cite{Pisot}, 
Vijayaraghavan \cite{Vija}, and many others. Here $\{x\}:=x-\lfloor x\rfloor 
$ denotes the fractional part of a real number $x$. For $\alpha=3/2$, Mahler 
\cite{Mahler} considered the following question: Does there exist a real 
number $\xi>0$ such that $(\{\xi (3/2)^k\})_{k\ge 1}$ lies in the interval 
$[0,1/2)$? While this question remains open, progress was made by Flatto, 
Lagarias, and Pollington \cite[Theorem 1.4]{FLP}, who showed that for any 
real number $\xi>0$ and coprime integers $p>q> 1$, we have 
\begin{equation}\label{eq:FLP}
\limsup_{k\to \infty}\{\xi(p/q)^k\}
-\liminf_{k\to \infty}\{\xi(p/q)^k\}\ge 1/p.
\end{equation}
Dubickas (\cite{Dubickas-BLMS}, \cite{Dubickas-2007}) extended this result to 
certain sequences of the form $(\{F(k)\alpha^k\})_{k\ge 1}$, where $\alpha>1$ 
is a real algebraic number. 

In this paper, we study more generally distribution modulo one of linear 
recurrent sequences. One special case of our main result is the following 
extension of the above-mentioned theorem of Flatto, Lagarias, and Pollington. 

\begin{theorem}\label{t:rational}
Let $\xi_1,\dots,\xi_n$ be nonzero real numbers and let 
$p_1,\dots,p_n,q_1,\dots,q_n$ be integers such that $p_i>\lvert q_i\rvert 
>0$ for all $1\le i\le n$ and
$\frac{p_1}{q_1},\dots,\frac{p_n}{q_n}$ are pairwise distinct. Assume that 
either $\xi_1$ is irrational or $p_1/q_1\notin\Z$. Then, for every integer 
$M\ge 1$, there exists an integer $l$ such that 
\[
\limsup_{k\to \infty}\left\{\sum_{i=1}^n\xi_i\left(\frac{p_i}{q_i}\right)^{kM+l}\right\}
-\liminf_{k\to \infty}\left\{\sum_{i=1}^n\xi_i\left(\frac{p_i}{q_i}\right)^{kM+l}\right\}\ge \frac{1}{p_1\dotsm p_n}.
\]
\end{theorem}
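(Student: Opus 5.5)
The plan is to adapt the Flatto--Lagarias--Pollington argument through a linear recurrence annihilator. Set $x_k=\sum_i \xi_i\alpha_i^{k}$ with $\alpha_i=p_i/q_i$. Let
\[
P(X)=\prod_{i=1}^n (q_iX-p_i)=\sum_{j=0}^n c_jX^j\in\Z[X].
\]
Its leading coefficient is $c_n=q_1\dotsm q_n$ and its constant term is $c_0=(-1)^np_1\dotsm p_n$. Since $P(\alpha_i)=0$, we have $\sum_j c_j x_{k+j}=0$ for every $k$. Replacing the step $1$ by $M$, we use instead
\[
P_M(X)=\prod_i (q_i^MX-p_i^M),
\]
whose roots are $\alpha_i^M$; these are still distinct, which requires some care when signs differ, since possibly $\alpha_i^M=\alpha_j^M$ with $\alpha_i=-\alpha_j$. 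In that case we group the terms and work with the distinct values $\alpha_i^M$, taking $\xi_i\alpha_i^l$ as the new coefficients. The sequence $y_k=x_{kM+l}$ then satisfies $\sum_j c^{(M)}_j y_{k+j}=0$ with integer coefficients, and $|c^{(M)}_0|\le (p_1\dotsm p_n)^M$.

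Write $y_k=N_k+r_k$ with $N_k\in\Z$ and $r_k=\{y_k\}\in[0,1)$. Then $\sum_j c_j N_{k+j}=-\sum_j c_j r_{k+j}$ is an integer. Suppose, for contradiction, that for every $l$ all limit points of $\{y_k\}$ lie in an interval $[a,a+\delta]$ with $\delta<1/(p_1\dotsm p_n)$. The first key step is to show, for $k$ large, that $E_k:=\sum_j c_j N_{k+j}$ is determined up to the small error $\delta\sum_j|c_j|$. That bound is useless by itself, so we need a sharper route. Following FLP, pass to a Z-number-type statement. Reduce to $M=1$ by considering the subsequence. Use the congruence structure: modulo $p_1\dotsm p_n$, the recurrence $c_nN_{k+n}\equiv -\sum_{j<n}c_jN_{k+j}-\sum_j c_j r_{k+j}$ shows that the integer sequence $(N_k)$ is forced to satisfy $p$-adic constraints. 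Concretely, $q_1\dotsm q_n$ is coprime to the relevant primes of $p_1\dotsm p_n$ after we reduce to coprime $p_i,q_i$.

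The second key step is the FLP trick. The sequence $z_k=c_n^{?}$-scaled sums $\sum_j c_j y_{k+j}\in\Z$ vanish, so
\[
\sum_j c_j r_{k+j}\equiv 0 \pmod 1 .
\]
For large $k$ we have $r_{k+j}\in[a-\epsilon,a+\delta+\epsilon]$. Hence $\sum_j c_j r_{k+j}=aP(1)+O\bigl((\delta+\epsilon)\sum_j |c_j|\bigr)$, which again needs a stronger device. I would instead apply the operator $(q X-p)$ one factor at a time, as in Dubickas. Define $w^{(0)}=y$ and $w^{(m)}_k=q_m w^{(m-1)}_{k+1}-p_m w^{(m-1)}_k$. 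Then $w^{(m)}$ annihilates the $\alpha_m$ component, while $\|w^{(m)}_k\|$ (distance to $\Z$) is controlled by the spread of the fractional parts of $w^{(m-1)}$, amplified by the factor $p_m$ in the crucial direction. Iterating $n$ times kills everything, giving $w^{(n)}_k=0$. Going backwards, at each stage the fractional parts of $w^{(m-1)}$ are confined to an interval of length less than $1/p_m$ times that of $w^{(m)}$. This is where the product $p_1\dotsm p_n$ arises, and varying $l$ lets us choose the residue class where the spread is attained.

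The main obstacle is the final step: deducing from "all limit points of $\{w^{(n-1)}_k\}$ lie in a short interval and $q_nw_{k+1}-p_nw_k\in\Z$" that $w^{(n-1)}$ is eventually a rational multiple of $\alpha_n^k$ with integrality forcing $\xi_1$ rational and $\alpha_1\in\Z$. Here I would use the FLP lemma: if $q u_{k+1}-p u_k\in\Z$ and $\limsup\{u_k\}-\liminf\{u_k\}<1/p$, then $u_k$ is eventually determined by a "$p/q$-expansion" that is forced to be periodic, giving a contradiction unless $\xi$ is rational and $q=\pm1$. The hypothesis on $\xi_1$ and $p_1/q_1$ enters exactly there, by ordering so that index $1$ is the last surviving factor. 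The role of $M$ and $l$ is handled by the pigeonhole principle over residues $l \bmod M$.
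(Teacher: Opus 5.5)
There is a genuine gap. None of the routes you sketch reaches the constant $1/(p_1\dotsm p_n)$, and the one you settle on does not as it stands.

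In the iteration $w^{(m)}_k=q_mw^{(m-1)}_{k+1}-p_mw^{(m-1)}_k$, suppose the fractional parts of $w^{(m-1)}$ eventually lie in an arc of length $\delta$. Then the fractional parts of $w^{(m)}$ lie in an arc of length $(p_m+\lvert q_m\rvert)\delta=L(q_mX-p_m)\,\delta$, and nothing in your argument improves this. The two fractional parts $r_{k+1}$ and $r_k$ move independently in the arc, so there is no ``crucial direction'' carrying only the factor $p_m$.

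The ``going backwards'' step is not a valid inference either. The only information on $w^{(m)}$, $m\ge 1$, is what you push forward from $w^{(0)}$. Read literally, the step would turn $w^{(n)}\equiv 0$ into eventual constancy of the fractional parts of $w^{(n-1)}$, which fails under the hypotheses.

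Consequently, with $\alpha_1$ the surviving root, the fractional parts of $w^{(n-1)}_k=\xi_1\prod_{m\ge 2}(q_m\alpha_1-p_m)\,\alpha_1^k$ are only known to have spread $<\prod_{m\ge 2}(1+\lvert q_m\rvert/p_m)/p_1$, not $<1/p_1$. So your final single-term FLP-type lemma never becomes applicable.

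Your treatment of $M$ loses in the same way. Viewing $(x_{kM+l})_k$ as a sum with ratios $(p_i/q_i)^M$ and annihilator $P_M$ only yields the constant $1/(p_1\dotsm p_n)^M$. ``Pigeonhole over residues'' does not explain how a good class $l$ is produced. (Also, $q_1\dotsm q_n$ need not be coprime to $p_1\dotsm p_n$; take $3/2$ and $4/3$.)

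The paper's proof rests on two devices you are missing.

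First, a real multiplier replaces $p_j+\lvert q_j\rvert$ by $p_j$. With $P_m(Y)=\prod_j(q_j^mY-p_j^m)$ as in your notation, $P$ divides $P_m(X^m)/(p_1\dotsm p_n)^{m-1}=\prod_j\bigl(q_j^mX^m/p_j^{m-1}-p_j\bigr)$ in $\R[X]$. The quotient $Q=\sum_i b_iX^i$ has $b_0=1$, and $L(PQ)\le\prod_j(p_j+\lvert q_j\rvert^m/p_j^{m-1})\to p_1\dotsm p_n$. Hence $\ell(P)\le p_1\dotsm p_n$; it is this multiplier, not a factor-by-factor iteration, that does the replacement.

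Second, because $Q$ has non-integer coefficients, integrality of $\sum_i(PQ)_iN_{k+i}$ is lost, and a combinatorial argument takes its place. Put $r_k=\{x_k\}$ and $N_k=\lfloor x_k\rfloor$. The sequence $s_k=\sum_jc_jr_{k+j}=-\sum_jc_jN_{k+j}$ takes finitely many integer values. It is not ultimately periodic: otherwise $(N_k)$ would be linear recurrent, and Dubickas' integrality lemma (Proposition~\ref{p:recur}, cases (a) and (c$'$)) would force $p_1/q_1\in\Z$ and $\xi_1\in\Q$. This is where the hypothesis enters.

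Lemma~\ref{l:Morse} (Morse--Hedlund for the word lifted to $S\times\Z/M\Z$) then gives words $u_1V$ and $u_2V$ with $u_1\ne u_2$. Both occur infinitely often at positions in one residue class modulo $M$. Compare $\sum_i(PQ)_i r_{k+i}=\sum_i b_i s_{k+i}=u_j+\sum_{i\ge 1}b_iv_i$ along the two families of occurrences. This gives $L(PQ)\cdot\max_l\bigl(\limsup_{k} r_{kM+l}-\liminf_{k} r_{kM+l}\bigr)\ge u_1-u_2\ge 1$ (Lemma~\ref{l:Dubickas}). The maximum over residue classes falls out of this comparison automatically, which is exactly the ``there exists $l$'' form of the theorem.
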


Taking $n=M=1$, we recover \eqref{eq:FLP}. For coprime integers $p> q> 1$, 
let $Z_{p/q}(s,t)$ be the set considered in \cite{FLP} consisting of real 
numbers $\xi>0$ such that $\{\xi(p/q)^k\}\in [s,t)$ for all $k\ge 0$. The 
case $n=1$ of Theorem \ref{t:rational} implies that, for every integer $M\ge 
1$, at least one of the real numbers $\xi$, $\xi(p/q)$, \dots, 
$\xi(p/q)^{M-1}$ does not belong to $Z_{(p/q)^M}(s,t)$ as long as $t-s<1/p$. 

Before stating our main theorem, we fix some notation and terminology. Let 
$\pi\colon \R\to \R/\Z$ denote the projection. For any real number $x$, we 
let $\lVert x\rVert$ denote the distance from $x$ to the nearest integer. By 
a \emph{linear recurrent sequence} we mean a sequence of real numbers 
$(x_k)_{k\ge 1}$ such that there exist $a_0,\dots,a_d\in \Z$ with $a_d\neq 0$ 
satisfying $\sum_{i=0}^d a_ix_{k+i}=0$ for all $k\ge 1$. Note that we allow 
$a_0=0$ and do not require $a_d= 1$. For any polynomial $P(x)=\sum_{i=0}^d 
a_i x^i\in \CC[x]$, its \emph{length} is defined to be $L(P):=\sum_{i=0}^d 
\lvert a_i\rvert$. For $P(x)\in \R[x]$, Dubickas defined its \emph{reduced 
length} to be $\ell(P):=\inf_Q L(PQ)$, where $Q$ runs through polynomials in 
$\R[x]$ of leading coefficient~$1$ or constant coefficient~$1$. We introduce 
a variant. 

\begin{defn}\label{d:reduced}
The \emph{overreduced length} of $R\in \Z[x]$ is defined to be 
$\lambda(R):=\min_Q \ell(R/Q)$, where $Q$ runs through factors of $R$ in 
$\Z[x]$ such that every root of $Q$ is either a root of unity or has modulus 
$< 1$ and every root of $Q$ that is a root of unity is simple. 
\end{defn}

By definition $\lambda(R)\le \ell(R)$. 

We can now state the main theorem of this paper.

\begin{theorem}\label{t:Dubickas}
Let $R(x)=\sum_{i=0}^r u_i x^i\in \Z[x]$ be a nonzero polynomial and let 
$(x_k)_{k\ge 1}$ be a sequence of real numbers satisfying the linear 
recurrence relation $\sum_{i=0}^r u_ix_{k+i}=0$ for all $k\ge 1$. Assume 
$x_k=\sum_{i=1}^n F_i(k)\alpha_i^k$ for all $k\ge 1$, where 
$\alpha_1,\dots,\alpha_n\in \CC$ are pairwise distinct and $F_1,\dots,F_n\in 
\CC[x]$. Assume that at least one of the following conditions holds: 
\begin{enumerate}[(a)]
\item There exists $1\le i\le n$ such that $F_i\neq 0$ and 
    $\lvert\alpha_i\rvert>1$ and $\alpha_i$ is not an algebraic integer; or 
\item There exists $1\le i\le n$ such that $\deg(F_i)\ge 1$ and 
    $\lvert\alpha_i\rvert=1$ and $\alpha_i$ is not an algebraic integer; or 
\item There exist $1\le i,j\le n$ and a field automorphism $\sigma\colon 
    \CC\to \CC$ such that $\lvert \alpha_i\rvert>1$, $\lvert 
    \alpha_j\rvert>1$, $\sigma(\alpha_i)=\alpha_j$, and $\sigma (F_i)\neq 
    F_j$; or 
\item There exist $1\le i,j\le n$ and a field automorphism $\sigma\colon 
    \CC\to \CC$ such that $\lvert \alpha_i\rvert\ge 1$, $\lvert 
    \alpha_j\rvert\ge 1$, $\sigma(\alpha_i)=\alpha_j$, and $\deg(\sigma 
    (F_i)- F_j)\ge 1$. 
\end{enumerate} 
Then the set $E$ of limit values of $(\pi(x_{k}))_{k\ge 1}$ is an infinite 
set and 
\begin{equation}\label{eq:Dubickas2}
\limsup_{k\to \infty} \lVert x_k\rVert \ge 1/L(R)
\end{equation} 
and, for every integer $M\ge 1$, there exists an integer $l\ge 0$ such that 
the set of limit values of $(\pi(x_{kM+l}))_{k\ge 1}$ is not contained in any 
interval of $\R/\Z$ of length $<1/\lambda(R)$. Moreover, the assertion that 
$E$ is an infinite set still holds under the following condition: 
\begin{enumerate}
\item[(e)] There exist $1\le i,j\le n$ and a field automorphism 
    $\sigma\colon \CC\to \CC$ such that $\lvert \alpha_i\rvert> 1$, $\lvert 
    \alpha_j\rvert\ge 1$, $\sigma(\alpha_i)=\alpha_j$, and $\sigma 
    (F_i)\neq F_j$. 
\end{enumerate}
\end{theorem}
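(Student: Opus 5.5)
The plan follows Dubickas' method. Write $y_k:=\sum_{i=0}^r u_i\lfloor x_{k+i}\rceil$, where $\lfloor x\rceil$ is a nearest integer to $x$, and put $\varepsilon_k:=x_k-\lfloor x_k\rceil$, so that $\lvert\varepsilon_k\rvert=\lVert x_k\rVert$. The recurrence gives $y_k=-\sum_{i=0}^r u_i\varepsilon_{k+i}$, an integer. If $\limsup\lVert x_k\rVert<1/L(R)$, then $\lvert y_k\rvert<1$ for large $k$, so $y_k=0$. Hence the integer sequence $(\lfloor x_k\rceil)$ eventually satisfies the same recurrence, and so does $(\varepsilon_k)$.

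The main step is to show that this is impossible under each of (a)--(d). Since $(\varepsilon_k)$ and $(\lfloor x_k\rceil)$ both satisfy the recurrence, their generalized power sum expansions are in the span of $k^j\beta^k$, where $\beta$ runs over the roots of $R$. By uniqueness of such expansions, $x_k=\lfloor x_k\rceil+\varepsilon_k$ splits the terms $F_i(k)\alpha_i^k$ between an integer sequence $(a_k)$ and a bounded sequence. A bounded recurrent sequence has no terms with $\lvert\beta\rvert>1$ and no terms $k^j\beta^k$ with $j\ge1$ and $\lvert\beta\rvert=1$. So every term with $\lvert\alpha_i\rvert>1$, and every term with $\lvert\alpha_i\rvert=1$ and $\deg F_i\ge1$, must lie in $(a_k)$. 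For integer recurrent sequences, the Fatou/Pólya-type rationality result says that the generating function is rational with denominator in $\Z[x]$ of constant term $1$. It follows that the characteristic roots are algebraic integers and that the coefficient polynomials are Galois-conjugate: $\sigma(G_\beta)=G_{\sigma(\beta)}$. This contradicts (a) and (b) directly. It contradicts (c) and (d), because $\sigma(\alpha_j)$ with $\lvert\alpha_j\rvert\ge1$ must again carry the corresponding large or polynomial part. I expect this Galois bookkeeping to be the main obstacle, especially keeping track of which conjugates land in the bounded part when $\lvert\alpha_j\rvert=1$, since a constant coefficient there can split.

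This argument gives \eqref{eq:Dubickas2}. Replacing $R$ by $RQ'$ for any $Q'\in\R[x]$ with leading or constant coefficient $1$, we may approximate (in the Dubickas sense, using real coefficients and a limiting argument) to get the bound with $\ell(R)$ in place of $L(R)$. For the statement about intervals, shift $x_k$ by a constant $c$. A constant is a recurrent sequence with root $1$, so conditions (a)--(d) are preserved. Pass to the subsequence $x_{kM+l}$, which is governed by $R_M$, the polynomial whose roots are the $M$-th powers of those of $R$. The factor $Q$ in Definition \ref{d:reduced} accounts for two things. Roots of modulus $<1$ contribute terms tending to $0$ that may be discarded. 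Simple roots of unity give eventually periodic components, which become constants along suitable residue classes $l$ mod $M$ (or after enlarging $M$). If the limit set of $\pi(x_{kM+l})$ lay in an interval of length $<1/\lambda(R)$ for every $l$, we would center it at $0$ by $c$ and apply the $\ell(R/Q)$ bound to the sequence with the $Q$-part removed, getting a contradiction. One must check that removing the $Q$-part keeps one of (a)--(d) true, which holds because the relevant $\alpha_i$ have $\lvert\alpha_i\rvert\ge1$ and are not simple roots of unity with constant coefficient.

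Finally, for the infinitude of $E$ under (a)--(e): suppose $E$ is finite. Since $(x_k)$ has a finite-dimensional linear structure, a suitable power $M$ and shift turn the residues into eventually periodic sequences, so $E\subset\frac1N\Z/\Z+\{\text{finitely many}\}$. Choose $M$ and $l$ so that $\pi(x_{kM+l})$ converges to some $e$. Then $\lVert x_{kM+l}-\tilde e\rVert\to0$, and the argument above with the limit $0$ (any $L$) yields an integer recurrent sequence plus a sequence tending to $0$. A sequence tending to $0$ has only roots of modulus $<1$. Therefore all terms with $\lvert\alpha\rvert\ge1$ are in the integer part, and the Galois-conjugacy forces $\sigma(F_i)=F_j$, contradicting (e) (the case $\lvert\alpha_j\rvert=1$ now included).
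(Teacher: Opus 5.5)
Your argument for \eqref{eq:Dubickas2} is sound and is essentially the paper's. Integrality forces $\lfloor x_k\rceil$ to satisfy the recurrence eventually, and Fatou's lemma plus uniqueness of exponential-polynomial expansions then excludes (a)--(d), as in Proposition~\ref{p:recur}. The other two assertions, however, are not established.

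\textbf{The $\lambda(R)$ assertion.} This rests on a step that fails. Replacing $R$ by $RQ'$ with $Q'\in\R[x]$ destroys integrality: $\sum_i (RQ')_i\lfloor x_{k+i}\rceil$ is no longer an integer, so its smallness forces nothing, and no limiting argument recovers this.

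In fact the intermediate inequality $\limsup_k\lVert x_k\rVert\ge 1/\ell(R)$ is false. Take $R=x-3$, so $\ell(R)=3$. Let $\xi=\sum_{j\ge1}d_j3^{-j}$ with $d_j\in\{-1,0,1\}$, no two consecutive $d_j$ zero, the nonzero $d_j$ alternating in sign, and $(d_j)$ not eventually periodic. Then $\xi$ is irrational, so (c) holds for $x_k=\xi3^k$. Yet $\lVert\xi3^k\rVert=\lvert\sum_{j\ge1}d_{k+j}3^{-j}\rvert\le 4/13<1/3$ for all $k$.

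What is missing is Dubickas' combinatorial Lemma~\ref{l:Dubickas}, which bounds the \emph{spread} $\limsup_k y_{kM+l}-\liminf_k y_{kM+l}$, not $\limsup\lvert y_k\rvert$. Write $x_k=y_k+z_k$ with $z_k\in\Z$ and $y_k$ bounded, take $R=PQ$ as in Definition~\ref{d:reduced}, and write $P=\sum_i a_ix^i$.
\begin{enumerate}[(i)]
\item The sequence $t_k=\sum_i a_ix_{k+i}$ satisfies the $Q$-recurrence, hence converges to a cycle.
\item The sequence $s_k=-\sum_i a_iz_{k+i}$ is a bounded integer sequence that is not ultimately periodic.
\item By Morse--Hedlund (Lemma~\ref{l:Morse}), $s$ reads $u_1V$ and $u_2V$, with $u_1\neq u_2$, at infinitely many positions of one residue class mod $M$.
\item Comparing $\sum_i(PQ')_iy_{k+i}$ (with $Q'(0)=1$) at such positions cancels $V$ and $t$, and yields $L(PQ')\max_l(\limsup-\liminf)\ge1$.
\end{enumerate}
The centering belongs in the splitting, $z_k=\lfloor x_k-\eta_{\bar k}\rfloor$ with residue-dependent $\eta_{\bar k}$, so that $R$ is unchanged. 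Shifting the sequence itself adds a factor $x-1$ (or $x^M-1$) that $\lambda(R)$ does not account for. Also, ``removing the $Q$-part'' is undefined when $Q$ and $R/Q$ share roots (e.g.\ $R=(x-1)^2$, $Q=x-1$, $x_k=\theta k$ with $\theta\notin\Q$). This is why the paper applies $P$ to $x_k$ instead.

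\textbf{Infinitude of $E$.} Nothing shows that finiteness of $E$ produces $M,l$ with $\pi(x_{kM+l})$ convergent; a priori the points of $E$ may be visited aperiodically. Moreover, along a single progression distinct $\alpha_i$ with equal $M$-th powers merge, so (e) need not transfer.

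The paper uses simultaneous Diophantine approximation instead. Multiplying $x_k$ by a suitable positive integer (which preserves the recurrence and (a)--(e)) puts $E$ inside $\pi((-\epsilon/2,\epsilon/2))$ with $\epsilon=1/L(R)$. Then $\lfloor x_k+1/2\rfloor$ eventually satisfies the $R$-recurrence, contradicting Proposition~\ref{p:recur} under (a)--(d).

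Under (e) the residual $y_k$ is then merely small, not tending to $0$, so a quantitative bound is needed. Once (c) and (d) are excluded, your own bookkeeping shows the $\alpha_j$-component of $y_k$ is $(F_j-\sigma(F_i))\alpha_j^k$, with $F_j-\sigma(F_i)$ a nonzero constant. Applying $T=R/(x-\alpha_j)^m$ ($m$ the multiplicity) isolates this term and gives $\limsup\lvert y_k\rvert\ge\lvert T(\alpha_j)\rvert\,\lvert F_j-\sigma(F_i)\rvert/L(T)$. Scaling $x_k$ beforehand by a large integer makes this exceed $1/2$, a contradiction.
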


By taking $\sigma(\alpha_i)=\alpha_i$, we see that condition (c) is satisfied 
in the following case: 
\begin{enumerate}
\item[(c$'$)] There exists $1\le i\le n$ such that $\lvert 
    \alpha_i\rvert>1$ and $F_i\notin \Q(\alpha_i)[x]$. 
\end{enumerate} 
Similarly, condition (d) is satisfied in the following case:
\begin{enumerate}
\item[(d$'$)] There exists $1\le i\le n$ such that $\lvert 
    \alpha_i\rvert\ge 1$ and $F_i(x)-F_i(0)\notin \Q(\alpha_i)[x]$. 
\end{enumerate} 

\begin{remark}
\begin{enumerate}
\item There exist $\alpha_1,\dots,\alpha_n\in \CC$ pairwise distinct and 
    $F_1,\dots,F_n\in \CC[x]$ satisfying one of the conditions (a), (b), 
    (c), (d) such that $x_k=\sum_{i=1}^n F_i(k)\alpha_i^k$ for all $k\ge 1$ 
    if and only if $(x_k)_{k\ge 1}$ is not a finite sum of sequences (of 
    complex numbers) of the following types: 
\begin{enumerate}[(A)]
\item $(\sum_\sigma \sigma (F(k)\alpha^k))_{k\ge 1}$, where $\alpha\in \CC$ 
    is an algebraic integer, $F(x)\in \Q(\alpha)[x]$, and $\sigma$ runs 
    through embeddings $\Q(\alpha)\to \CC$; 
\item $(\xi\alpha^k)_{k\ge 1}$, where $\xi\in \CC$ and $\lvert 
    \alpha\rvert=1$; 
\item $(F(k)\alpha^k)_{k\ge 1}$, where $F(x)\in \CC[x]$ and $\lvert 
    \alpha\rvert <1$. 
\end{enumerate}

\item The assertion on $\lambda(R)$ in Theorem \ref{t:Dubickas} implies 
    that, for every integer $M\ge 1$, there exists an integer $l$ such that 
\begin{gather*}
\limsup_{k\to \infty}\{x_{kM+l}\}- \liminf_{k\to \infty} \{x_{kM+l}\}\ge 1/\lambda(R),\\
\limsup_{k,k'\to \infty} \lVert  x_{kM+l}- x_{k'M+l}\rVert\ge 1/\max(3,\lambda(R)).
\end{gather*}
\end{enumerate}
\end{remark}

Next we look at some special cases of the theorem. The following 
characterization of the finiteness of the set of limit values generalizes 
classical theorems of Vijayaraghavan \cite[Theorem~4]{Vija} (sufficient 
condition in the case $\deg(F_i)=0$ for all $i$) and Pisot 
\cite[Th\'eor\`eme~II]{Pisot-1946} (case $n=1$, $\deg(F_1)=0$). 

\begin{cor}\label{c:iff}
Let $(x_k)_{k\ge 1}$ be a sequence of real numbers such that 
$x_k=\sum_{i=1}^n F_i(k)\alpha_i^k$ for all $k\ge 1$, where $F_1,\dots,F_n\in 
\CC[x]$ are nonzero and $\alpha_1,\dots,\alpha_n\in \CC$ are pairwise 
distinct algebraic numbers. Assume that for each $1\le i\le n$, $\alpha_i$ is 
either a root of unity or Galois conjugate to a complex number of modulus 
$\neq 1$. Then the set of limit values of $(\{x_k\})_{k\ge 1}$ is finite if 
and only if all the following conditions are satisfied: 
\begin{enumerate}[(1)]
\item For every $1\le i\le n$ such that $\lvert \alpha_i\rvert > 1$, 
    $\alpha_i$ is an algebraic integer; and 
\item For every field automorphism $\sigma\colon \CC\to \CC$ and every 
    $1\le i\le n$ such that $\lvert \alpha_i\rvert>1$, either $\lvert 
    \sigma(\alpha_i)\rvert <1$ or there exists $1\le j\le n$ such that 
    $\sigma(\alpha_i)=\alpha_j$ and $\sigma(F_i)=F_j$; and 
\item For every field automorphism $\sigma\colon \CC\to \CC$ and every 
    $1\le i\le n$ such that $\alpha_i$ is a root of unity and $\deg(F_i)\ge 
    1$, there exists $1\le j\le n$ such that $\sigma(\alpha_i)=\alpha_j$ 
    and $\deg(\sigma(F_i)-F_j)\le 0$.   
\end{enumerate}
\end{cor}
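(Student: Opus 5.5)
Both directions reduce to Theorem~\ref{t:Dubickas} (for ``only if'') and an explicit decomposition of $x_k$ (for ``if''). First, $(x_k)$ is a linear recurrent sequence. Take $R$ the product of the minimal polynomials over $\Z$ of the $\alpha_i$, raised to a power exceeding $\max_i\deg F_i$. Then $\sum_i u_i x_{k+i}=0$, since each $F_i(k)\alpha_i^k$ is annihilated by $(\text{min poly of }\alpha_i)^{\deg F_i+1}$.

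\emph{Only if.} Suppose the set of limit values of $(\{x_k\})$ is finite. Then the set $E$ of limit values of $(\pi(x_k))$ is finite, so by Theorem~\ref{t:Dubickas} none of (a)--(e) holds; here the $F_i$ are nonzero. Failure of (a) gives (1). For (2), take $\lvert\alpha_i\rvert>1$ and an automorphism $\sigma$ with $\lvert\sigma(\alpha_i)\rvert\ge1$. First, $\sigma(\alpha_i)$ must equal some $\alpha_j$: otherwise extend the representation by adding $\alpha_{n+1}=\sigma(\alpha_i)$ with $F_{n+1}=0$. This is legitimate because Theorem~\ref{t:Dubickas} only requires pairwise distinct $\alpha$'s and arbitrary polynomials. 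Then $\sigma(F_i)\ne 0=F_{n+1}$ triggers (e). Once $\sigma(\alpha_i)=\alpha_j$, failure of (e) gives $\sigma(F_i)=F_j$. For (3), take $\alpha_i$ a root of unity with $\deg F_i\ge1$. Then $\sigma(\alpha_i)$ is also a root of unity, so it has modulus $1$. The same padding trick together with failure of (d) forces $\sigma(\alpha_i)=\alpha_j$ for some $j$ and $\deg(\sigma(F_i)-F_j)\le 0$.

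\emph{If.} Split $x_k=y_k+z_k+w_k$ according to $\lvert\alpha_i\rvert>1$, $=1$, $<1$. The $w_k$ part tends to $0$.

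For $\lvert\alpha_i\rvert=1$, the hypothesis says $\alpha_i$ is a root of unity or is conjugate to some $\beta$ of modulus $\neq1$. In the latter case, take $\sigma$ with $\sigma(\beta)=\alpha_i$. If $\lvert\beta\rvert>1$, then $\beta$ is not among the $\alpha$'s, or it is and condition (2) applied to it forces the $\alpha_i$-term into the orbit structure. I expect the cleanest route is to group the terms into full Galois orbits.

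By (1) and (2), for each $\alpha$ with $\lvert\alpha\rvert>1$, the terms over its full conjugate set combine into $\sum_\sigma\sigma(F(k)\alpha^k)$ plus conjugate terms of modulus $<1$. A full-orbit sum like this with $F\in\Q(\alpha)[x]$ has the form $\mathrm{Tr}(F(k)\alpha^k)$. Let $D$ be a common denominator of the coefficients. Then $D\cdot\mathrm{Tr}(F(k)\alpha^k)$ is an integer, and its residue mod $D$ is periodic in $k$. Hence it contributes finitely many fractional parts, up to terms tending to $0$. Conjugates of modulus $<1$ that are missing from the given sum are added and subtracted; they tend to $0$ anyway. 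Conjugates of modulus exactly $1$ that are not roots of unity must appear in the sum with the matching $\sigma(F)$, by (2).

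For roots of unity with $\deg F_i\ge1$, (3) shows that the nonconstant parts form Galois-stable orbits. Those sums have the shape $\mathrm{Tr}(G(k)\zeta^k)$ with $G$ rational-coefficient on the orbit, which are periodic mod $1$ after clearing denominators. The remaining constant terms $c\zeta^k$ are periodic.

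Finally, unimodular non-roots-of-unity with constant $F$ (type (B) of the Remark) also have to be handled. In the ``if'' direction they must cancel: (2) only forces such $\alpha$ into the orbit sums of larger conjugates, where they are handled as above. This orbit bookkeeping, especially for unimodular $\alpha_i$ that are not roots of unity, is where I expect the main obstacle to lie.
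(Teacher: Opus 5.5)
Your ``only if'' direction is the paper's argument, and it is fine: you pad with $\alpha_{n+1}=\sigma(\alpha_i)$, $F_{n+1}=0$, and use the failure of (a), (e), (d). The gap is in the ``if'' direction, at the point you flagged, and it cannot be closed. Condition (2) constrains only indices with $\lvert\alpha_i\rvert>1$. Nothing in (1)--(3) forces a unimodular $\alpha_i$ that is not a root of unity to have a conjugate of modulus $>1$ among $\alpha_1,\dots,\alpha_n$. You never dispose of your case ``$\beta$ is not among the $\alpha$'s'', and your closing claim that such terms ``must cancel'' is false.

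Here is a counterexample. Let $\alpha=e^{i\theta}$ be one of the two roots on the unit circle of the irreducible Salem polynomial $x^4-x^3-x^2-x+1$; its other roots are $\bar\alpha$, $\tau\approx 1.722$ and $1/\tau$. Take $n=2$, $\alpha_1=\alpha$, $\alpha_2=\bar\alpha$, $F_1=F_2=1$.
\begin{itemize}
\item $x_k=2\cos(k\theta)$ is real.
\item Each $\alpha_i$ is conjugate to $\tau$, so the standing hypothesis holds, and (1)--(3) hold vacuously.
\item But $\alpha$ is not a root of unity, so every point of $[0,1]$ is a limit value of $(\{x_k\})_{k\ge 1}$.
\end{itemize}
This agrees with Theorem~\ref{t:Dubickas}: padding with $\alpha_3=\tau$, $F_3=0$ triggers (e). Taking $F_1=F_2=x$ instead gives a counterexample with nonconstant $F_i$, via (d). So the ``if'' direction is false as stated. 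The paper's own proof passes over the same point when it asserts that (1)--(3) yield a decomposition into sums of types (A), (B), (C).

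What is missing is a hypothesis, not an argument. One fix is to demand (2) also for indices $i$ with $\lvert\alpha_i\rvert=1$ and $\alpha_i$ not a root of unity. Given (2), this is equivalent to requiring that every such $\alpha_i$ be conjugate to some $\alpha_j$ with $\lvert\alpha_j\rvert>1$.

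Necessity of the added condition still follows from (e) with padding. If $\rho(\alpha_i)=\beta$ with $\lvert\beta\rvert>1$, apply (e) to $(\beta,\alpha_i,\rho^{-1})$ to see that $\beta$ is listed with polynomial $\rho(F_i)$. Then apply it to $(\beta,\sigma(\alpha_i),\sigma\rho^{-1})$.

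Under the amended conditions, every unimodular non-root of unity lies in a full orbit sum $\sum_\sigma\sigma(F(k)\beta^k)$, up to terms of modulus $<1$, with $\beta$ an algebraic integer and $F\in\Q(\beta)[x]$. Your bounded-denominator argument for such sums, the treatment of roots of unity via (3), and the decay of the remaining terms then complete the proof, along the lines of the paper's reduction to types (A)--(C).
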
 

The following two corollaries are consequences of Theorem \ref{t:Dubickas} 
and Proposition \ref{prop} in the case $x_k=F(k)\alpha^k +G(k)$, where 
$F(x),G(x)\in \R[x]$ are polynomials. Recall that a \emph{Pisot number} is a 
real algebraic integer $\alpha>1$ all of whose Galois conjugates other than 
$\alpha$ have modulus strictly less than $1$, and a \emph{Salem number} is a 
real algebraic integer $\alpha>1$ all of whose Galois conjugates other than 
$\alpha$ have modulus $\le 1$ such that $\alpha$ is not a Pisot number. 
Although we do not need this fact, recall also that a real algebraic number 
$\alpha>1$ is a Pisot number if and only if there exists a real number 
$\xi\neq 0$ such that $\lVert \xi \alpha^k\rVert \to 0$ as $k\to \infty$, by 
a theorem of Hardy \cite[Theorem~A]{Hardy} (see also \cite{Pisot}). 

\begin{cor}\label{c:1}
Let $\alpha$ be a real algebraic number with $\lvert\alpha\rvert>1$ and let 
$F(x),G(x)\in \R[x]$ with $F(x)$ nonzero. Then the set $E$ of limit values of 
$(\pi(F(k)\alpha^k+G(k)))_{k\ge 1}$ is finite if and only if $\lvert 
\alpha\rvert$ is a Pisot number, $F(x)\in \Q(\alpha)[x]$, and $G(x)-G(0)\in 
\Q[x]$. Moreover, if $G(x)-G(0)\notin \Q[x]$, then $E$ is uncountable. 
\end{cor}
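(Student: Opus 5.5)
We treat $x_k=F(k)\alpha^k+G(k)$ as a linear recurrent sequence of the type in Theorem~\ref{t:Dubickas}, with $\alpha_1=\alpha$, $F_1=F$, $\alpha_2=1$, $F_2=G$; note $\alpha\neq 1$ since $\lvert\alpha\rvert>1$. Since $\alpha$ is algebraic, $(x_k)$ satisfies a recurrence with integer coefficients. For instance, take $R$ to be a power of the minimal polynomial of $\alpha$ (scaled to lie in $\Z[x]$), of exponent $>\deg F$, times $(x-1)^{\deg G+1}$. We may also adjoin further $\alpha_j$ with $F_j=0$; the $\alpha_j$ are then still pairwise distinct, and $R$ is unchanged.

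\emph{``Only if''.} Assume $E$ is finite; we show each of the three conditions holds by checking that its failure triggers a hypothesis of Theorem~\ref{t:Dubickas}.
If $\alpha$ is not an algebraic integer, condition (a) applies with $i=1$.
If $F\notin\Q(\alpha)[x]$, condition (c$'$) applies.
If $G(x)-G(0)\notin\Q[x]=\Q(1)[x]$, condition (d$'$) applies with $\alpha_i=1$.
Finally, suppose $\alpha$ is an algebraic integer but $\lvert\alpha\rvert$ is not Pisot. The conjugates of $-\alpha$ are the negatives of those of $\alpha$, so $\alpha$ has a conjugate $\beta\neq\alpha$ with $\lvert\beta\rvert\ge 1$. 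Here $\beta\neq 1$, because $\alpha$ is not rational. Choose $\sigma\in\operatorname{Aut}(\CC)$ with $\sigma(\alpha)=\beta$ and adjoin $\alpha_3=\beta$, $F_3=0$. Then $\sigma(F)\neq 0=F_3$ because $F\neq 0$, so condition (e) holds.
In every case Theorem~\ref{t:Dubickas} says $E$ is infinite, a contradiction.

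\emph{``If''.} Let $\alpha=\alpha^{(1)},\dots,\alpha^{(d)}$ be the conjugates of $\alpha$ and $\sigma_1,\dots,\sigma_d$ the embeddings of $\Q(\alpha)$. Pick an integer $D\ge 1$ such that $D$ times every coefficient of $F$ lies in $\Z[\alpha]$. Then $D\sum_s\sigma_s(F(k)\alpha^k)$ is a trace of an algebraic integer, hence lies in $\Z$. Since $\lvert\alpha\rvert$ is Pisot, every $\lvert\alpha^{(s)}\rvert<1$ for $s\ge 2$, so $\sigma_s(F(k))(\alpha^{(s)})^k\to 0$ for $s\ge 2$. Likewise, if $D'$ clears the denominators of $G-G(0)\in\Q[x]$, then $G(k)-G(0)\in\frac1{D'}\Z$. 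Hence
\[x_k\in G(0)+\tfrac{1}{DD'}\Z+o(1),\]
so $E\subseteq \pi\bigl(G(0)+\frac1{DD'}\Z\bigr)$, which is a finite set.

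\emph{Uncountability.} This is the main obstacle, since Theorem~\ref{t:Dubickas} only gives that $E$ is infinite. I would first try to show that $E$ is invariant under translation by a set whose closure is uncountable. The sets $E_l$ of limit points of $(\pi(x_{kM+l}))_k$ satisfy $E=\bigcup_{l}E_l$. Write $G=G_{\Q}+G'$, where the coefficients of $G'$ (apart from its constant term) are not all rational. By Weyl's theorem applied along arithmetic progressions, $(\pi(G'(kM+l)))_k$ is equidistributed on a coset of a closed subgroup of positive dimension. The plan is then to choose subsequences $k_j$ along which $\pi(F(k_j)\alpha^{k_j})$ converges; this is possible by compactness, but the delicate point is to make this choice essentially independently of the value of $\pi(G'(k_j))$. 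The independence should come from a Weyl-type joint equidistribution argument, or else from the companion Proposition~\ref{prop} announced in the text, which I would invoke at this step. Once independence is established, $E$ contains a translate of an infinite closed subgroup of $\R/\Z$, i.e.\ of $\R/\Z$ itself, and is therefore uncountable.
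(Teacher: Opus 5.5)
Your proof of the equivalence is correct and is essentially the paper's argument. The paper cites Corollary~\ref{c:iff}. Its necessity is proved by triggering (a), (c), (d), (e) of Theorem~\ref{t:Dubickas} exactly as you do, adjoining $\sigma(\alpha_i)$ with zero coefficient, and its sufficiency is your trace argument.

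The uncountability part, however, has a genuine gap, and its intermediate goal is false. You aim to show that $E$ is stable under an uncountable set of translations, hence, being closed, equals $\R/\Z$. Counterexample: take $\alpha=2$, $G(x)=\theta x$ with $\theta\notin\Q$, and the arc $J=\pi([0,0.6])$. Doubling maps every closed arc of length $\ge 1/2$ onto $\R/\Z$. So backward choice plus a nested-compact-sets argument gives $r\in\R/\Z$ with $2^{k-1}r\in J-\pi(k\theta)$ for all $k\ge 1$. For $\xi\in\R$ with $\pi(2\xi)=r$ we get $\pi(\xi 2^k+k\theta)\in J$ for all $k$, so $E\subseteq J\neq\R/\Z$. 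Here $\xi\neq 0$, since $(\pi(k\theta))_{k\ge1}$ is dense. Thus $\xi$ can be chosen precisely to correlate $\pi(F(k)\alpha^k)$ with $\pi(G(k))$, so no Weyl-type independence is available. Proposition~\ref{prop} does not supply any such independence either.

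The missing idea is to annihilate the exponential part rather than decouple it from $G$. Let $P=\sum_s a_sx^s\in\Z[x]$ be the primitive minimal polynomial of $\alpha$ raised to the power $\deg F+1$. Then $\sum_s a_sF(k+s)\alpha^{k+s}=0$ for all $k$, and $P(1)\neq 0$ because $\alpha\neq 1$.

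Suppose $E$ were countable. Being closed, $\sum_s a_sE$ would be countable and compact, so it would miss a closed interval $K$ of positive length. For a small enough neighbourhood $J'$ of $E$ we would then have $\sum_s a_sJ'\cap K=\emptyset$ and $\pi(x_k)\in J'$ for all large $k$. Hence $\pi\bigl(\sum_s a_sG(k+s)\bigr)=\sum_s a_s\pi(x_{k+s})\notin K$ for all large $k$.

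Now write $G(x)=\sum_i\theta_ix^i$ and let $j\ge 1$ be maximal with $\theta_j\notin\Q$. The coefficient of $k^j$ in $\sum_s a_sG(k+s)$ is $P(1)\theta_j$ plus a rational number, hence irrational. So this polynomial sequence is equidistributed modulo one by Weyl's theorem, a contradiction.

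This is exactly Proposition~\ref{prop} combined with Example~\ref{ex:adm}(1) and Remark~\ref{r:Spec}, which is how the paper argues. It only yields that $\sum_s a_sE$ is dense, not that $E=\R/\Z$, but that is all uncountability requires.
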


The case $g\le 0$ (and $\alpha>1$) of the first assertion of the following 
corollary recovers theorems of Dubickas: The case $f=0\ge g$  contains
\cite[Theorem~2]{Dubickas} and the case $f>0> g$ contains 
\cite[Theorem]{Dubickas-2007}. We adopt the convention $\deg(0)=-1$.

\begin{cor}\label{c:2}
Let $\alpha$ be a real algebraic number with $\lvert \alpha\rvert >1$ and let 
$F(x),G(x)\in \R[x]$ with $f=\deg(F) \ge 0$ and $g=\deg(G)$. Let $P(x)\in 
\Z[x]$ be the minimal polynomial of $\alpha$. Assume that at least one of the 
following conditions holds: 
\begin{enumerate}[(1)]
\item $\lvert \alpha\rvert$ is neither a Pisot number nor a Salem number; 
    or 
\item $F(x)\notin \Q(\alpha)[x]$; or 
\item $G(x)-G(0)\notin \Q[x]$; or
\item $\lvert \alpha\rvert$ is a Salem number and $f>0$.
\end{enumerate}
Let $x_k=F(k)\alpha^k+G(k)$ for $k\ge 1$. Then
\begin{equation}\label{eq:c2} 
\limsup_{k\to \infty}\lVert x_{k}\rVert\ge 1/L((x-1)^{g+1}P(x)^{f+1}Q(x)),
\end{equation}
for every nonzero $Q\in \Z[x]$, and, for every integer $M\ge 1$, there exists 
an integer $l\ge 0$ such that the set of limit values of 
$(\pi(x_{kM+l}))_{k\ge 1}$ is not contained in any interval of $\R/\Z$ of 
length $<1/\ell((x-1)^{\max(g,0)}P(x)^{f+1})$. Moreover, under assumption 
(3), the set of limit values of $(\pi(x_k))_{k\ge 1}$ is not contained in any 
interval of length $<1/\inf_{Q}L(P^{f+1}Q)$, where $Q$ runs through 
polynomials in $\Z[x]$ satisfying $Q(1)\neq 0$. 
\end{cor}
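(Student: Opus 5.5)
We derive Corollary \ref{c:2} from Theorem \ref{t:Dubickas}. The plan is to write $x_k$ in the form required there, check that (1)--(4) give one of (a)--(d), and then compute $L(R)$ and $\lambda(R)$ for a suitable recurrence polynomial $R$.

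\textbf{Setting up the recurrence.} Put $n=2$, $\alpha_1=\alpha$, $F_1=F$, $\alpha_2=1$, $F_2=G$; these are distinct since $\lvert\alpha\rvert>1$. The sequence $F(k)\alpha^k$ is annihilated by $P(x)^{f+1}$ and $G(k)$ by $(x-1)^{g+1}$. Hence for every nonzero $Q\in\Z[x]$ the sequence $x_k$ satisfies the recurrence with $R=(x-1)^{g+1}P(x)^{f+1}Q(x)$, and \eqref{eq:Dubickas2} gives \eqref{eq:c2} directly. For the interval statement we use $R=(x-1)^{g+1}P^{f+1}$.

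\textbf{Checking the hypotheses.} We go through (1)--(4).
\begin{enumerate}[(i)]
\item Under (1), suppose $\alpha$ is not an algebraic integer; then (a) holds. Otherwise $\lvert\alpha\rvert$ is an algebraic integer that is neither Pisot nor Salem, so some conjugate $\beta\neq\pm\alpha$ has $\lvert\beta\rvert>1$. Take $\sigma$ with $\sigma(\alpha)=\beta$. Then $\beta\notin\{\alpha_1,\alpha_2\}$, so $\sigma(F_1)=\sigma(F)\neq0$ with no matching $F_j$. Formally I will add the term $0\cdot\beta^k$ to the list of $\alpha_i$ (allowed since the $F_i$ may vanish), so that (c) holds with $F_j=0$.
\item Under (2), condition (c$'$) holds.
\item Under (3), condition (d$'$) holds for $\alpha_2=1$.
\item Under (4), take a conjugate $\beta$ of modulus $1$ (added with $F_\beta=0$). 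Then $\deg(\sigma(F)-0)=f\ge1$, which is (d).
\end{enumerate}
Adding zero terms changes neither $x_k$ nor $R$, so this is harmless.

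\textbf{Computing $\lambda$.} In Definition \ref{d:reduced} take the factor $Q=x-1$. Its only root is a simple root of unity, so it is admissible and
\[
\lambda(R)\le\ell\bigl((x-1)^{g}P^{f+1}\bigr).
\]
If $g=-1$, there is no factor $x-1$ to remove, and $R=P^{f+1}$ already matches $(x-1)^{\max(g,0)}P^{f+1}$ (as it also does for $g=0$ after removal). Thus $1/\lambda(R)\ge 1/\ell((x-1)^{\max(g,0)}P^{f+1})$, which gives the second assertion.

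\textbf{The "moreover" part.} This is the main obstacle. We need $M=1$, $l=0$, and an infimum over $Q$ with $Q(1)\neq0$ of $L(P^{f+1}Q)$, so the generic theorem does not suffice. My plan is to exploit $G(x)-G(0)\notin\Q[x]$. Choose $Q\in\Z[x]$ with $Q(1)\neq0$ and put $y_k=\sum_i c_ix_{k+i}$, where $P^{f+1}Q=\sum c_ix^i$. This kills the $F\alpha^k$ part and leaves $y_k=H(k)$, a polynomial with $H(x)-H(0)\notin\Q[x]$, since applying $Q(\text{shift})$ with $Q(1)\neq0$ is invertible on polynomials of degree $\le g$ and preserves rationality modulo constants. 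By Weyl's theorem $H(k)$ is equidistributed modulo one. If the limit set of $\pi(x_k)$ lay in an interval $I$ of length $<1/L$, with $L=L(P^{f+1}Q)$, then $y_k$ modulo one would eventually lie in a set of the form $\sum c_i(I-\text{center})+\Z$, which has length $<1$. This contradicts equidistribution. The subtle point is centering: the centers contribute $\sum c_i=P(1)^{f+1}Q(1)$ times the center, which is only a translate, so the argument goes through.

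Finally, the infinitude and uncountability claims belong to Corollary \ref{c:1} and are not needed here.
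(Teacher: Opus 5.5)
Your proposal is correct and follows the paper. The first two assertions come from Theorem~\ref{t:Dubickas} with $R=(x-1)^{g+1}P^{f+1}Q$ and $R=(x-1)^{g+1}P^{f+1}$, using the same case analysis ((a)/(c), (c$'$), (d$'$), (d)) and the bound $\lambda(R)\le\ell((x-1)^{\max(g,0)}P^{f+1})$. Your ``moreover'' argument is the proof of Proposition~\ref{prop} specialized to $P^{f+1}Q$ with $\nu=0$: because the limit set lies in a short interval, the relevant property holds eventually rather than only with upper uniform density one, so Weyl's theorem suffices in place of Hlawka--Lawton.

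Two harmless slips. In case (1) the conjugate $\beta\neq\alpha$ with $\lvert\beta\rvert>1$ may well be $-\alpha$ (e.g.\ $\alpha=\sqrt2$), but you only use $\beta\notin\{\alpha,1\}$. Also, the operator you invert is $(P^{f+1}Q)(\text{shift})$, not $Q(\text{shift})$; it is invertible because $P(1)Q(1)\neq0$.
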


We prove Theorem \ref{t:Dubickas} by refining the techniques developed by 
Dubickas in \cite{Dubickas-BLMS}, \cite{Dubickas}, \cite{Dubickas-2007}. One 
crucial observation by Dubickas and Novikas \cite[Lemma~2]{DN} is that the 
sequence $(\lfloor \xi\alpha^{k+1}\rfloor-\alpha\lfloor 
\xi\alpha^{k}\rfloor)_{k\ge 1}$ is not ultimately periodic for $\xi>0$ and 
$\alpha>1$ satisfying $\alpha\in \Q\backslash \Z$. This was extended by 
Dubickas to certain sequences associated to $(\lfloor F(k) 
\alpha^k\rfloor)_{k\ge 1}$, where $\alpha>1$ is a real algebraic number.  We 
show that if one of the conditions (a), (b), (c), (d) in Theorem 
\ref{t:Dubickas} holds, then $(\lfloor x_k\rfloor)_{k\ge 1}$ is not a linear 
recurrent sequence (see Proposition \ref{p:recur} for a more general 
statement), which implies the bounds in Theorem \ref{t:Dubickas} by the 
following general result. 

\begin{theorem}\label{p:Dubickas}
Let $R(x)=\sum_{i=0}^r u_i x^i\in \Z[x]$ be nonzero and let $(x_k)_{k\ge 1}$ 
be a sequence of real numbers satisfying the linear recurrence relation 
$\sum_{i=0}^r u_i x_{k+i}=0$ for all $k\ge 1$. Assume that $x_k=y_k+z_k$ with 
$z_k\in \Z$ for all $k\ge 1$ such that $(y_k)_{k\ge 1}$ is bounded and 
$(z_k)_{k\ge 1}$ is not a linear recurrent sequence. Then
\[\limsup_{k\to \infty} \lvert y_k\rvert\ge 1/L(R)\] 
and for any integer $M\ge 1$, there exists an integer $l$ such that 
\begin{equation}\label{eq:pD1}
\limsup_{k\to \infty} y_{kM+l} - \liminf_{k\to \infty} y_{kM+l}\ge 1/\lambda(R).
\end{equation}
\end{theorem}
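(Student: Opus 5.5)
The first inequality follows from the integrality of $R$ applied to $z$. Let $E$ denote the shift operator $(Ev)_k=v_{k+1}$. Suppose $\limsup\lvert y_k\rvert<1/L(R)$. Then for $k$ large,
\[
\Bigl\lvert\sum_{i=0}^r u_i z_{k+i}\Bigr\rvert=\Bigl\lvert\sum_{i=0}^r u_i y_{k+i}\Bigr\rvert\le L(R)\max_{0\le i\le r}\lvert y_{k+i}\rvert<1,
\]
where the equality uses $R(E)x=0$. The left-hand side is an integer, so $R(E)z_k=0$ for all $k\ge K$. Hence $x^{K}R(x)$ annihilates $(z_k)_{k\ge 1}$, so $(z_k)$ is linear recurrent, a contradiction.

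For \eqref{eq:pD1}, argue by contradiction. Suppose that for every $l\in\{0,\dots,M-1\}$ the oscillation of $(y_{kM+l})_k$ is $<1/\lambda(R)$; this covers all integers $l$ up to shifting $k$. Then there are $\delta<1/\lambda(R)$ and constants $c_l$ such that for $k$ large, $y_k\in[c_{k\bmod M},c_{k\bmod M}+\delta]$. Choose a factor $Q$ of $R$ as in Definition \ref{d:reduced} realizing $\lambda(R)=\ell(S)$ with $S=R/Q\in\Z[x]$. Then choose $T\in\R[x]$ with leading or constant coefficient $1$ and $L(ST)\,\delta<1$.

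Put $v=S(E)x$ and $w=S(E)z=v-S(E)y$, a sequence of integers. Since $Q(E)v=R(E)x=0$, $v$ is a combination of terms $\zeta^k$ with $\zeta$ a simple root of unity and of terms $G(k)\beta^k$ with $\lvert\beta\rvert<1$. Hence $v_k=p_k+o(1)$ with $(p_k)$ periodic of some period $N$. In particular $w$ is bounded, so it takes values in a finite set. Let $N'=\operatorname{lcm}(M,N)$. Then
\[
T(E)w_k=T(E)p_k+o(1)-(ST)(E)y_k ,
\]
and $(ST)(E)y_k=\sum_j c_j y_{k+j}$ lies in an interval of length $\le\delta L(ST)<1$ that depends only on $k\bmod N'$. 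So for $k$ large, $T(E)w_k$ lies in a fixed interval $I_{k\bmod N'}$ of length $<1$, up to a vanishing error.

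The main step is to show that $w$ is ultimately periodic. Let $t=\deg T$ and write $T=\sum_{j=0}^{t}\tau_j x^j$. Consider the states $s_k=(w_k,\dots,w_{k+t-1},k\bmod N')$, which range over a finite set.
\begin{itemize}
\item If $T$ is monic, $w_{k+t}$ is an integer in a translate of $I_{k\bmod N'}$ determined by $s_k$, up to a shrinking error. Since the interval has length $<1$, eventually $w_{k+t}$ is determined by $s_k$. A finite-state deterministic forward sequence is ultimately periodic.
\item If $T(0)=1$, the same reasoning determines $w_k$ from $s_{k+1}$, so the dynamics is backward-deterministic. Pick a state occurring at infinitely many times $a_1<a_2<\cdots$. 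Running backwards from each occurrence produces the same sequence. Comparing the occurrences at $a_1$ and $a_j$ shows $w_{n}=w_{n+(a_2-a_1)}$ on longer and longer ranges, hence for all large $n$.
\end{itemize}
The delicate point is to make the "vanishing error" uniform. One does this by taking $k$ beyond a threshold where the $o(1)$ term is smaller than half of $1-\delta L(ST)$, so the determinism holds from some point on.

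Once $w$ is ultimately periodic, some $x^{K}(x^P-1)S(x)\in\Z[x]$ annihilates $(z_k)_{k\ge 1}$. So $(z_k)$ is linear recurrent, contradicting the hypothesis, and \eqref{eq:pD1} follows.
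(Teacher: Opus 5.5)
Your argument is correct. The reduction is the same as the paper's. Your $S$, $Q$, $v=S(E)x$ and $w=S(E)z$ play the roles of the paper's $P$, $Q$, $t$ and $-s$. In both proofs the point is that $v$ converges to a cycle, while $w$ is a bounded integer sequence that cannot be ultimately periodic.

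The difference lies in the combinatorial core. The paper proves a direct quantitative bound, Lemma \ref{l:Dubickas}, valid for any finite real alphabet with minimal gap $\delta$. There, the Morse--Hedlund-type Lemma \ref{l:Morse} supplies infinitely many positions in one residue class where the word reads $u_1V$ and infinitely many where it reads $u_2V$ (or $Vt$ and $Vt'$). Subtracting the two resulting estimates for $(ST)(E)y$ gives the inequality.

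You instead argue by contradiction and prove the needed special-factor statement yourself. Small oscillation confines $T(E)w_k$ to an interval of length $<1$ depending only on $k \bmod N'$. So $w$ is eventually produced by a forward-deterministic rule (monic $T$) or a backward-deterministic rule ($T(0)=1$) on a finite state space. Your version is self-contained and avoids the appeal to Morse--Hedlund and Bugeaud's Corollary A.4. The paper's lemma is more general and yields the bound without a contradiction.

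One sentence in the backward case should be tightened. Comparing the occurrences at $a_1$ and $a_j$ only gives $s_n=s_{n+(a_j-a_1)}$ on the fixed window $K_0\le n\le a_1$; it does not give periodicity with period $a_2-a_1$. A correct version runs as follows.
\begin{itemize}
\item Write $s_k=\Psi(s_{k+1})$ for $k\ge K_0$, let $X$ be the finite state space, and let $\sigma$ be the recurring state, with $a_1\ge K_0$.
\item The occurrences at $a_1$ and $a_2$ give $\Psi^{D}(\sigma)=\sigma$ with $D=a_2-a_1$.
\item Since $s_{a_j-i}=\Psi^{i}(\sigma)$ for $0\le i\le a_j-K_0$, this yields $s_n=s_{n+D}$ for $K_0\le n\le a_j-D$, for every $j$.
\end{itemize}
Alternatively, each $s_k$ with $k\ge K_0$ lies in $\bigcap_m\Psi^m(X)$, on which $\Psi$ is a bijection. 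Hence the sequence is forward-determined as well.
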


Our original motivation for studying the distribution modulo one of linear 
recurrent sequences comes from the work of Astorg and Boc Thaler 
\cite{Astorg-Boc Thaler-2024} in complex dynamics. To each increasing 
sequence $(n_k)_{k\ge 1}$ of positive integers, they associated a phase 
sequence $(n_{k+1}-\alpha n_k-\beta \ln n_k)_{k\ge 1}$, where $\alpha>1$ and 
$\beta$ are real numbers. Taking $n_k=\lfloor\xi\alpha^k-k\theta\rfloor$ with 
$\xi>0$ and $\theta=\frac{\beta\ln \alpha}{\alpha-1}$, we have 
\[n_{k+1}-\alpha n_k-\beta \ln n_k=-\{\xi\alpha^{k+1}-(k+1)\theta\}+\alpha\{\xi\alpha^k-k\theta\}-\theta-\beta\ln\xi+o(1).\]
Thus our previous results in \cite{CYZ} on the distribution of the phase 
sequence imply restrictions on the distribution of 
$(\{\xi\alpha^k-k\theta\})_{k\ge 1}$. The results obtained in this paper by 
working directly with linear recurrent sequences turned out to be stronger. 

This paper is organized as follows. In Section~\ref{s:Weyl}, we use a theorem 
of Hlawka and Lawton on well-distribution extending Weyl's theorem to study 
one class of linear recurrent sequences (Proposition \ref{prop}). In 
Section~\ref{s:proofs}, we prove all the results stated in the introduction. 
In Section~\ref{s:final}, we briefly discuss variants of the notion of 
spectrum of a sequence.

\section{A consequence of well-distribution}\label{s:Weyl}

\begin{defn}
Let $(a_0,\dots,a_d)$ be a finite collection of integers. We say that a 
subset $I\subseteq \R/\Z$ is \emph{$(a_0,\dots,a_d)$-admissible} if the 
Minkowski sum 
\[\sum_{s=0}^d a_s I=\{\sum_{s=0}^da_s x_s\mid x_0,\dots,x_d\in 
I\}
\] 
is \emph{not} dense in $\R/\Z$. 
\end{defn}

\begin{example}\label{ex:adm}
\begin{enumerate}
\item Any countable closed subset of $\R/\Z$ is 
    $(a_0,\dots,a_d)$-admissible.
\item Any interval of $\R/\Z$ of length $<1/\sum_{s=0}^d \lvert a_s\rvert$ 
    is $(a_0,\dots,a_d)$-admissible. 
\end{enumerate}
\end{example}

For any subset $A\subseteq \mathbb{Z}_{\geqslant 1}$, recall that its {\em 
upper uniform density} and {\em lower uniform density} are defined to be 
\begin{align*}
\uden(A)&:=\lim_{n\to \infty}\frac{1}{n}\sup_{h\in \Z_{\ge 0}}\#(A\cap [h+1,h+n]),\\
\lden(A)&:=\lim_{n\to \infty}\frac{1}{n}\inf_{h\in \Z_{\ge 0}}\#(A\cap [h+1,h+n]).
\end{align*}
We refer to \cite{GTT} for a proof that the limits exist and for equivalent 
definitions. We say that $A$ has \emph{uniform density} $\delta$ if  
$\uden(A)=\lden(A)=\delta$. 

Recall that $\pi\colon \R\to \R/\Z$ denotes the projection. 

\begin{prop}\label{prop}
Let $P(x)=\sum_{s=0}^d a_s x^s\in \Z[x]\setminus\{0\}$, $F(x)=\sum_{i=0}^f 
\theta_i x^i\in \R[x]$. Let $(x_k)_{k\ge 1}$ be a sequence of real numbers 
satisfying the linear recurrence relation $\sum_{s=0}^d a_sx_{k+s}=0$ for all 
$k\ge 1$. Assume that there exists an $(a_0,\dots,a_d)$-admissible subset 
$I\subseteq \R/\Z$ such that for any open neighborhood $J$ of $I$, $\{k\ge 
1\mid \pi(x_k+F(k)) \in J\}$ has upper uniform density $1$. Then $\theta_i\in 
\Q$ for all $\nu< i\le f$, where $\nu\geqslant 0$ is the order of $P(x)$ at 
$x=1$. In particular, if $P(1)\neq 0$, then $F(x)-F(0)\in \Q[x]$. 
\end{prop}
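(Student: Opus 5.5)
The strategy is to apply the recurrence to the perturbed sequence $w_k:=x_k+F(k)$, which kills $x_k$, and then use well-distribution of the polynomial that remains.

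Define
\[
G(k):=\sum_{s=0}^d a_s w_{k+s}=\sum_{s=0}^d a_s F(k+s)=(P(E)F)(k),
\]
where $E$ is the shift operator. The first equality holds because $\sum_s a_s x_{k+s}=0$.

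Write $P(x)=(x-1)^\nu P_1(x)$ with $P_1(1)\neq0$, and $E=1+\Delta$. Then $P(E)=\Delta^\nu P_1(1+\Delta)$. The operator $P_1(1+\Delta)=P_1(1)+\Delta\cdot(\cdots)$ is invertible on polynomials and preserves degree. The operator $\Delta^\nu$ lowers degree by exactly $\nu$ and kills exactly the polynomials of degree $<\nu$.

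Suppose, for contradiction, that some $\theta_i$ with $i>\nu$ is irrational. Let $m>\nu$ be the largest such index, and split $F=F_{\mathrm{rat}}+F_{\mathrm{irr}}$. Here $F_{\mathrm{rat}}$ collects the terms with rational coefficients together with all terms of degree $\le\nu$, so $F_{\mathrm{irr}}$ is supported in degrees $\nu<i\le m$ and has leading coefficient $\theta_m\notin\Q$.

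Now $P(E)F_{\mathrm{rat}}$ has rational coefficients in all degrees except possibly the contributions from the low-degree terms. Those are annihilated by $\Delta^{\nu}$ up to constants: $\Delta^\nu$ applied to degree $\le\nu$ gives a constant. Hence $P(E)F_{\mathrm{rat}}=c+H(k)$ with $H\in\Q[x]$ and $c\in\R$.

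Also $P(E)F_{\mathrm{irr}}$ has degree $m-\nu\ge1$ with leading coefficient $\theta_m\cdot m!/(m-\nu)!\cdot P_1(1)$, which is irrational. Let $D$ be a common denominator of $H$. Then along each residue class $k\equiv r\pmod D$ the values $H(k)$ are constant mod $1$. So $G(k)$ restricted to $k=Dj+r$ is a polynomial in $j$ whose non-constant part has an irrational coefficient, namely its leading coefficient.

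By the Hlawka--Lawton theorem (Weyl's theorem for well-distribution), the sequence $(\pi(G(Dj+r)))_j$ is well distributed in $\R/\Z$. Consequently the set of $k$ with $\pi(G(k))\in U$, for any nonempty open $U$, has positive \emph{lower} uniform density.

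Next use admissibility. Since $\sum_s a_s I$ is not dense, its closure misses a nonempty open $U_0$. Its closure equals $\sum_s a_s\bar I$, which is compact.

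Choose a smaller closed arc $U$ inside $U_0$ and a neighborhood $J$ of $I$ small enough that $\sum_s a_s \bar J$ still misses $U$. This is possible by compactness and continuity of the map $(y_0,\dots,y_d)\mapsto\sum a_sy_s$.

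Let $A=\{k:\pi(w_k)\in J\}$, which has upper uniform density $1$. Then the set $B=\{k: k,k+1,\dots,k+d\in A\}$ also has upper uniform density $1$. Indeed, the complement of $A$ has lower uniform density $0$, so some windows $[h+1,h+n]$ contain $o(n)$ non-elements, and removing $d+1$ shifts still leaves density $\to1$ in those windows.

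For $k\in B$ we have $\pi(G(k))\in\sum a_s J$, hence $\pi(G(k))\notin U$. But $\{k:\pi(G(k))\in \mathrm{int}\,U\}$ has positive lower uniform density, and a set of positive lower uniform density must meet a set of upper uniform density $1$. This is a contradiction, so $\theta_i\in\Q$ for all $\nu<i\le f$.

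The final claim is the case $\nu=0$.

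The main obstacle is the bookkeeping in the degree argument: showing that the leading irrational term survives $P(E)$ and that the rational part only contributes something periodic mod $1$ plus a constant. The Hlawka--Lawton theorem is what supplies the needed uniform, rather than ordinary, density.
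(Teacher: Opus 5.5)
Your proof is correct and follows essentially the same route as the paper: apply the recurrence to $x_k+F(k)$ so that only the polynomial $G=P(E)F$ remains, show that $G$ has an irrational non-constant coefficient (you via $P(E)=\Delta^\nu P_1(1+\Delta)$, the paper via an explicit coefficient formula with $\Theta=x\,d/dx$), invoke Hlawka--Lawton, and reach a contradiction using admissibility together with the fact that a finite union of translates of a set of lower uniform density $0$ still has lower uniform density $0$ (the paper's lemma on unions of translates). Your reduction to residue classes modulo the denominator $D$ is unnecessary: Hlawka--Lawton only requires $G(x)-G(0)\notin\Q[x]$, and this already holds because the coefficient of $x^{m-\nu}$ in $G$ is a rational number plus an irrational one.
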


\begin{remark}\label{r:Spec}
The assumption of Proposition~\ref{prop} is satisfied if the set of limit 
values of $(\pi(x_k+F(k)))_{k\ge 1}$ is $(a_0,\dots,a_d)$-admissible. 
\end{remark} 

A sequence $(x_k)_{k\ge 1}$ in $\R/\Z$ is said to be \emph{well distributed} 
if for every interval $I\subseteq \R/\Z$, the set $\{k\ge 1\mid x_k\in I\}$ 
has uniform density equal to the length of $I$. This property was introduced 
by Hlawka \cite{Hlawka0} and Petersen \cite{Petersen}. The proof of 
Proposition \ref{prop} uses the following extension by Hlawka \cite{Hlawka} 
and Lawton \cite[Theorem~2]{Lawton} of Weyl's theorem \cite[Satz 12]{Weyl}: 
For any polynomial $F(x)\in \R[x]$ such that $F(x)-F(0)\notin \Q[x]$, the 
sequence $(\pi(F(k)))_{k\ge 1}$ is well distributed. 

In general lower uniform density fails to be subadditive: There exist subsets 
$A,B\subseteq \Z_{\ge 1}$ such that $\lden(A\cup B)>\lden(A)+ \lden(B)$. The 
following lemma for the lower uniform density of a finite union of translates 
appeared already in \cite[Lemma 3.9]{CYZ}. We include a proof here for the 
sake of completeness. For $A\subseteq \Z_{\ge 1}$ and $m\in \Z$, we put 
$A[m]=\{k\ge 1\mid k+m\in A\}$. 

\begin{lemma}\label{l:upper-den}
Let $A\subseteq\mathbb{Z}_{\geqslant1}$ and let $M$ be a finite set of 
integers. Then $\lden(\bigcup_{m\in M}A[m]) \le \#M\cdot \lden(A)$. 
\end{lemma}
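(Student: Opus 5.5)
The plan is a direct counting argument on windows, using the definition of $\lden$ as a limit of $\frac1n\inf_h\#(\cdot\cap[h+1,h+n])$. Write $B=\bigcup_{m\in M}A[m]$ and put $c=\max_{m\in M}\lvert m\rvert$. It suffices to show that for every $n$ and every $h\ge 0$ there is a window of length $n$ on which $A$ is not much smaller than $B/\#M$, namely
\[
\inf_{h'\ge 0}\#(B\cap[h'+1,h'+n])\le \#M\cdot \#(A\cap[h+1,h+n]) + C\quad\text{with } C \text{ independent of } n,h.
\]
Taking the infimum over $h$, dividing by $n$ and letting $n\to\infty$ then gives the lemma.

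To prove the displayed inequality, I would fix $h$ and look at the shifted window $[h'+1,h'+n]$ with $h'=h+c$, so that $h'\ge 0$. If $k\in B\cap[h'+1,h'+n]$, then $k+m\in A$ for some $m\in M$. Because $k+m$ lies in $[h+1,h+n+2c]$, we get
\[
\#(B\cap[h'+1,h'+n])\le\sum_{m\in M}\#\{k\in[h'+1,h'+n]: k+m\in A\}\le \#M\cdot\#(A\cap[h+1,h+n+2c]).
\]
The last count is at most $\#M\bigl(\#(A\cap[h+1,h+n])+2c\bigr)$. This gives the inequality with $C=2c\#M$. One could also take $h'$ so that the shifts land inside $[h+1,h+n]$ up to the $2c$ boundary; either way the error is bounded independently of $n$.

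Dividing by $n$, taking the infimum over $h$ on the right and noting that the left side does not exceed the infimum over $h'$ yields
\[
\frac1n\inf_{h'}\#(B\cap[h'+1,h'+n])\le \#M\cdot\frac1n\inf_h\#(A\cap[h+1,h+n])+\frac{2c\#M}{n}.
\]
Letting $n\to\infty$ gives the claim. There is no real obstacle here; the only care needed is that the translates $A[m]$ only contain indices $k\ge1$, so I choose $h'=h+c$ to keep $h'\ge 0$ and to ensure that $k+m\ge h+1$. The subadditivity failure mentioned before the lemma does not matter, because we compare $B$ with translates of a single set $A$ on the same window rather than summing lower densities of different sets.
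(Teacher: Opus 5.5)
Your proof is correct and is essentially the paper's argument. You bound the count of $\bigcup_{m\in M}A[m]$ in a window by the sum over $m\in M$ of counts of translates of $A$, absorb the shifts into a boundary error of size $2\#M\max_{m\in M}\lvert m\rvert$, then take the infimum over $h$ and let $n\to\infty$. The only difference is cosmetic: the paper keeps the same window $[h+1,h+n]$ and handles indices $\le 0$ via the convention $a_n=0$ for $n\le 0$, whereas you shift the window by $c$. You should also dispose of the trivial case $M=\emptyset$ first, since $c$ is undefined there.
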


\begin{proof}
We may assume that $M$ is nonempty. Let $a_n=\#( A\cap [1,n])$. For $n\le 0$ 
we adopt the convention that $a_n=0$. Then $a_n\le a_{n+1}\le a_n+1$ for all 
$n\in \Z$. Let $U=\bigcup_{m\in M}A[m]$ and $k=\max_{m\in M} \lvert m\rvert$. 
Then 
\begin{multline*}
\#(U\cap 
[h+1,h+n])\le \sum_{m\in M}\#(A[m]\cap[h+1,h+n])=\sum_{m\in M}(a_{h+n+m}-a_{h+m})\\\le \#M\cdot (a_{h+n+k}-a_{h-k})\le \#M\cdot 
(a_{h+n}-a_h+2k)=\#M\cdot (a_{h+n}-a_h) +O(1).
\end{multline*}
Thus 
\[\lden(U)=\lim_{n\to \infty} \frac{1}{n}\inf_{h\ge 0}\#(U\cap 
[h+1,h+n])\le\#M\cdot \lim_{n\to \infty}\frac{1}{n}\inf_{h\ge 0} (a_{h+n}-a_h)=\#M\cdot \lden(A).\qedhere
\] 
\end{proof}

\begin{proof}[Proof of Proposition \ref{prop}]
Let $\gamma_k=x_k+F(k)$ and let $\tilde\gamma_k=\sum_{s=0}^d 
a_s\gamma_{k+s}$. Then 
\[
\tilde{\gamma}_k
=\underbrace{\left(\sum\limits_{s=0}^d a_sx_{k+s}\right)}_{=0}
+\sum\limits_{s=0}^d\sum_{i=0}^f a_s\theta_i(k+s)^i
=\sum\limits_{j=0}^f c_jk^j,
\]
where 
\[
c_j
=\sum_{s=0}^d a_s\sum_{i=j}^f \binom{i}{j}\theta_i s^{i-j}
=\sum_{i=j}^f \binom{i}{j}\theta_i(\Theta^{i-j}P)(1).
\] 
Here $\Theta=x\frac{d}{dx}$ and we adopt the usual convention $s^0=1$ (even 
for $s=0$). Since $\nu\ge 0$ is the order of $P(x)$ at $x=1$, $(\Theta^t 
P)(1)=0$ for all $0\le t < \nu$ and $(\Theta^{\nu} P)(1)=P^{(\nu)}(1)\neq 0$. 
Assume that $\theta_j$ is irrational for some $j>\nu$. We may assume that $j$ 
is the largest such integer. Then, 
\[c_{j-\nu}=\binom{j}{j-\nu}\theta_j 
P^{(\nu)}(1)+\sum_{i=j+1}^{f} 
\binom{i}{j-\nu}\theta_i(\Theta^{i-j+\nu}P)(1)\] is irrational, so that 
$(\pi(\tilde \gamma_k))_{k\ge 1}$ is well distributed by the theorem of 
Hlawka and Lawton recalled earlier. By the admissibility assumption, the 
complement of $\sum_{s=0}^d a_s I$ has nonempty interior, and thus contains a 
closed interval $K$ of positive length. By continuity, there exists an open 
neighborhood $J$ of $I$ such that $\sum_{s=0}^d a_s J$ is contained in the 
complement of $K$. By assumption and Lemma \ref{l:upper-den}, 
\[\{k\ge 1\mid \pi(\tilde \gamma_k)\in K\}\subseteq 
\bigcup_{s=0}^d \{k\ge 1\mid \pi(\gamma_{k+s})\notin J\}
\] 
has lower uniform density $0$, which contradicts the well-distribution of 
$(\pi(\tilde \gamma_k))_{k\ge 1}$. 
\end{proof}

\section{Proofs of the main results}\label{s:proofs}

\begin{prop}\label{p:recur}
Notation as in Theorem \ref{t:Dubickas}. Assume $x_k=y_k+z_k$ with $z_k\in 
\Z$ for all $k\ge 1$ and $(y_k)_{k\ge 1}$ bounded. If one of the conditions 
(a), (b), (c), or (d) holds, then $(z_k)_{k\ge 1}$ is not a linear recurrent 
sequence. Moreover, there exists a constant $N>0$ depending only on $R$ (and 
not on $(x_k)_{k\ge 1}$), such that, whenever there exists $k_0\ge 1$ such 
that $\sum_{m=0}^r u_m z_{k+m}=0$ for all $k\ge k_0$, we have $\limsup_{k\to 
\infty} \lvert y_k\rvert\ge N\cdot \lvert \sigma(F_i)-F_j\rvert$ under 
condition (e). 
\end{prop}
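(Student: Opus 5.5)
Suppose, for contradiction, that $(z_k)_{k\ge 1}$ satisfies a linear recurrence with integer coefficients, say $S(E)z=0$ for some nonzero $S\in\Z[x]$, where $E$ is the shift operator. Then $y_k=x_k-z_k$ is annihilated by $RS$. This forces two things.

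First, $z_k$ has a generalized power sum expansion $z_k=\sum_\beta G_\beta(k)\beta^k$, where $\beta$ runs over the roots of $S$.

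Second, $y_k=x_k-z_k=\sum_\gamma H_\gamma(k)\gamma^k$ is a generalized power sum. Since $y$ is bounded and power-sum expansions are unique, every term of $y$ with $|\gamma|>1$ must vanish. Likewise, every term with $|\gamma|=1$ and $\deg H_\gamma\ge 1$ must vanish; this uses the standard fact that $\sum_{|\gamma|=1}H_\gamma(k)\gamma^k$ grows like $k^{D}$ in upper density unless it is zero. Hence the terms of $x$ with $|\alpha_i|>1$, and the non-constant-coefficient terms of $x$ with $|\alpha_i|=1$, coincide exactly with the corresponding terms of $z$.

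Now $z$ is an integer sequence satisfying an integer recurrence. Its generating function is therefore a rational function in $\Q(t)$, so $\mathrm{Gal}(\overline{\Q}/\Q)$, and hence any field automorphism $\sigma$ of $\CC$, permutes the terms: $\sigma(G_\beta)=G_{\sigma(\beta)}$. Using $\sigma$, we read off contradictions with each condition.

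\textbf{(c) and (d).} Take $\sigma(\alpha_i)=\alpha_j$ with $|\alpha_i|,|\alpha_j|>1$. Then $F_i=G_{\alpha_i}$ and $F_j=G_{\alpha_j}$, so $\sigma(F_i)=F_j$, contradicting (c). For (d), the same identity holds for the non-constant parts, giving $\deg(\sigma(F_i)-F_j)\le 0$.

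\textbf{(a) and (b).} Here we use integrality. Write $S=cS_0$ with $S_0$ primitive and let $\beta$ be a root of $S$. By Fatou's lemma we may reduce the recurrence of $z$ to a minimal one with $S$ monic. Then every root with $G_\beta\neq 0$ is an algebraic integer. More carefully, Fatou's theorem says that a rational power series with integer coefficients can be written as $A/B$ with $B(0)=1$ and $A,B\in\Z[t]$, which means the characteristic roots are algebraic integers. Since $\alpha_i$ with $|\alpha_i|>1$ and $F_i\ne0$ appears as a root of $z$, it is an algebraic integer, contradicting (a). Similarly, in case (b) $\alpha_i$ appears in $z$ with a non-constant polynomial, again contradicting the assumption that $\alpha_i$ is not an algebraic integer.

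\textbf{The quantitative statement under (e).} Suppose $\sum_m u_m z_{k+m}=0$ only for $k\ge k_0$. Then the roots of $z$ lie among those of $R$, with multiplicities bounded by those in $R$; absorb the finitely many initial terms. The argument above shows that the terms of $x$ and $z$ at $\alpha_i$ agree modulo the contribution of $y$. More precisely, $y$ is a power sum whose coefficient at $\alpha_i$ is $F_i-G_{\alpha_i}$, and at $\alpha_j$ it is $F_j-\sigma(G_{\alpha_i})$.

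Since $z$ is Galois-equivariant, $\sigma(F_i)-F_j$ equals $\sigma(H_{\alpha_i})-H_{\alpha_j}$, where $H$ denotes the coefficients of $y$. Here $H_{\alpha_i}=0$ because $|\alpha_i|>1$ and $y$ is bounded. This gives $|\sigma(F_i)-F_j|=|H_{\alpha_j}|$ with $|\alpha_j|\ge1$.

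It remains to bound $|H_{\alpha_j}|$ by $C\limsup|y_k|$, using $y$ annihilated by $R$, a space of fixed dimension $r$. I would use a norm-equivalence and compactness argument: on the finite-dimensional space of bounded sequences annihilated by $R$, the functional "$\limsup|y_k|$" is a norm on the part supported on roots of modulus $\ge1$. Since bounded forces modulus-one roots to have constant coefficients, this reduces to a Dirichlet/almost-periodicity argument, showing that $\limsup|\sum_{|\gamma|=1}c_\gamma\gamma^k|\ge \max|c_\gamma|$ via averaging against $\overline{\gamma}^k$. Then $N$ depends only on $R$.

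\textbf{Main obstacle.} I expect the main difficulty to be this last part together with the application of Fatou's lemma, in particular ensuring that the constants in the quantitative bound depend only on $R$.
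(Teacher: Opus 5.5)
Your proof is correct, but it takes a different route from the paper's.

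\textbf{How the paper argues.} The paper first enlarges $R$ so that $z$ satisfies the $R$-recurrence. It writes $R=P^{f+1}Q$ with $P$ the minimal polynomial of $\alpha_i$, and applies the filter $Q$ to $x,y,z$, so that $\tilde z$ is an integer sequence supported only on the conjugates of $\alpha_i$. This is exactly the shape needed to invoke Dubickas' Lemma~1, which gives integrality and Galois-equivariance of the $G_j$. The paper must then transport these conclusions back from the filtered coefficients $H_j$ to the $F_j$, through the triangular formula $c_{j,g}=\sum_{i\ge g}\binom{i}{g}\xi_{j,i}(\Theta^{i-g}Q)(\alpha_j)$, using $c_{j,f_j}\neq 0$ and the compatibility of $\sigma$ with $(\Theta^{t}Q)(\alpha_j)$.

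\textbf{How your route differs.} You skip the filtering entirely.
\begin{itemize}
\item Galois-equivariance of the expansion of $z$ is immediate from $z_k\in\Z$ and uniqueness of generalized power sums.
\item Integrality of every characteristic root $\beta$ with $G_\beta\neq 0$ comes from Fatou's lemma (reduced denominator in $\Z[t]$ with constant term $1$). Fatou's lemma applies to an arbitrary integer recurrence, which is why you do not need to isolate one conjugacy class.
\item Comparing the coefficients $H_\gamma=F_\gamma-G_\gamma$ of $y=x-z$ directly then settles (a)--(d) with no coefficient bookkeeping.
\end{itemize}

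\textbf{The quantitative part under (e).} The paper annihilates the other conjugates with $(P/(x-\alpha_j))^{f+1}$ and bounds by length, obtaining $N=\min_\alpha\lvert R_\alpha(\alpha)\rvert/L(R_\alpha)$. Your Cesàro averaging is sharper:
\begin{itemize}
\item If $\lvert\alpha_j\rvert=1$, averaging $y_k\overline{\alpha_j}^{\,k}$ gives $\lvert H_{\alpha_j}\rvert\le\limsup_{k}\lvert y_k\rvert$, because the other unimodular terms average to $0$ and the terms with modulus $<1$ decay.
\item If $\lvert\alpha_j\rvert>1$, then $H_{\alpha_j}=0$ and the bound is trivial.
\end{itemize}
Together with $\sigma(F_i)-F_j=\sigma(H_{\alpha_i})-H_{\alpha_j}=-H_{\alpha_j}$, this gives the statement with the universal constant $N=1$. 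So the norm-equivalence/compactness detour, and the ``main obstacle'' you anticipate about constants depending only on $R$, are unnecessary.

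\textbf{What each route buys.} Yours is shorter and yields a uniform constant. The paper's stays inside Dubickas' framework and uses the same linear-combination-and-length estimates as elsewhere in the paper, relying on Dubickas' lemma rather than Fatou's.

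\textbf{Minor points to tidy.}
\begin{itemize}
\item The expansions hold only for $k$ large when $S(0)=0$ or when the recurrence starts at $k_0$; this is harmless.
\item The ``standard fact'' should be stated precisely: a bounded generalized power sum has no terms with $\lvert\gamma\rvert>1$, and only constant coefficients for $\lvert\gamma\rvert=1$. Your averaging argument, after normalizing by the dominant growth $k^{D}\rho^{k}$, proves it.
\end{itemize}
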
 

Note that in the last assertion of Proposition \ref{p:recur}, we necessarily 
have $\sigma(F_i)-F_j\in \CC$. Indeed, case (d) cannot hold by the first 
assertion of the proposition. 

\begin{proof}
We refine the arguments by Dubickas in the first part of 
\cite[Section~4]{Dubickas-2007} using techniques of the proof of Proposition 
\ref{prop}. Up to reordering and adding zeroes to the sequence 
$F_1,\dots,F_n$ if necessary, we may assume that $i=1$ in the conditions (a), 
(b), (c), (d), and (e), and moreover $\alpha_1,\dots,\alpha_d$ are the Galois 
conjugates of $\alpha_1$. Suppose that $(z_k)_{k\ge 1}$ is a linear recurrent 
sequence. 

For the first assertion of Proposition \ref{p:recur}, we may assume, by 
replacing $R$ with a multiple of $R$, that the sequence $(z_k)_{k\ge 1}$ 
satisfies $\sum_{i=0}^r u_i z_{k+i}=0$ for all $k\ge 1$.  Let $P(x)\in \Z[x]$ 
be the minimal polynomial of $\alpha_1$, of degree $d$. Write $R=P^{f+1} Q$, 
where $P$ and $Q\in \Z[x]$ are coprime. Write $P^{f+1}(x)=\sum_{i=0}^{d(f+1)} 
v_i x^i$, $Q(x)=\sum_{i=0}^{e}b_ix^i$, where $d=\deg(P)$ and $e=\deg(Q)$. Let 
\[\tilde x_k=\sum_{i=0}^e b_i x_{k+i},\quad \tilde y_k=\sum_{i=0}^e b_i y_{k+i},\quad \tilde z_k=\sum_{i=0}^e 
b_iz_{k+i}.
\] 
Then $\tilde z_k$ satisfies the linear recurrence relation $\sum_{i=0}^{d(f+1)} v_i 
\tilde z_{k+i}=0$ for all $k\ge 1$. Thus there exist $G_1,\dots,G_d\in 
\CC[x]$ such that $\tilde z_k=\sum_{j=1}^d G_j(k)\alpha_j^k$ for all $k\ge 
1$. By \cite[Lemma~1]{Dubickas-2007}, if $G_1,\dots,G_d$ are not all zero, 
then, for all $1\le j\le d$, $\alpha_j$ is an algebraic integer and 
$G_j(x)\in \Q(\alpha_j)[x]$, and every isomorphism $\sigma\colon 
\Q(\alpha_i)\to \Q(\alpha_j)$ sending $\alpha_i$ to $\alpha_j$ sends $G_i$ to 
$G_j$ for all $1\le i,j\le d$. 

For $1\le j\le d$, write $F_j(x)=\sum_{i=0}^{f_j}\xi_{j,i}x^i$, where 
$f_j=\deg(F_j)$. We have 
\begin{equation}\label{eq:temp}
\tilde x_k=\sum_{s=0}^e b_s (\sum_{j=1}^d F_j(k+s)\alpha_j^{k+s})
=\sum_{s=0}^e\sum_{j=1}^d\sum_{i=0}^{f_j} b_s\xi_{j,i}(k+s)^i\alpha_j^{k+s}=\sum_{j=1}^d\alpha_j^k\sum_{g=0}^{f_j} c_{j,g} k^g,
\end{equation}
where 
\begin{equation}\label{eq:cj}
c_{j,g}
=\sum_{s=0}^e b_s\sum_{i=g}^{f_j} \binom{i}{g}\xi_{j,i} s^{i-g}\alpha_j^s
=\sum_{i=g}^{f_j} \binom{i}{g}\xi_{j,i}(\Theta^{i-g}Q)(\alpha_j).
\end{equation}
Here $\Theta=x\frac{d}{dx}$. Note that $Q(\alpha_j)\neq 0$ for all $1\le j\le 
d$. Let $H_j(x)=\sum_{i=0}^{f_j} c_{j,i} x^i$. From \eqref{eq:temp}, we get 
\begin{equation}\label{eq:yk}
\tilde y_k=\tilde x_k-\tilde z_k = \sum_{j=1}^d (H_j(k)-G_j(k))\alpha_j^k.
\end{equation}

Consider the power series
\begin{equation}\label{eq:ps} 
\sum_{k=1}^\infty \tilde y_kx^k=\sum_{j=1}^d\sum_{k=1}^\infty (H_j(k)-G_j(k))\alpha_j^kx^k.
\end{equation}
Note that, for any nonzero polynomial $S(x)\in \CC[x]$ and $\beta\in 
\CC\backslash \{0\}$, $\sum_{k=1}^\infty S(k)\beta^kx^k$ is a rational 
function with one and only one pole at $1/\beta$ and the order of the pole is 
$\deg(S)+1$. Since $(\tilde y_k)_{k\ge 1}$ is bounded, \eqref{eq:ps} 
converges on the open unit disk and has at most simple poles on the unit 
circle. Thus $H_j=G_j$ for every $j$ such that $\lvert \alpha_j\rvert 
>1$ and $\deg(H_j-G_j)\le 0$ for every $j$ such that $\lvert \alpha_j\rvert=1$. 

Suppose that we are in case (a): $\lvert \alpha_1\rvert >1$, $F_1\neq 0$, and 
$\alpha_1$ is not an algebraic integer. Then $H_1=G_1$. Since $c_{1,f_1}\neq 
0$ by \eqref{eq:cj}, it follows that $G_1\neq 0$ and consequently $\alpha_1$ 
is an algebraic integer. Contradiction. 

Suppose that we are in case (b): $\lvert \alpha_1\rvert =1$, 
$f_1=\deg(F_1)\ge 1$, and $\alpha_1$ is not an algebraic integer. Then 
$\deg(H_1-G_1)\le 0$. Since $c_{1,f_1}\neq 0$ by \eqref{eq:cj}, it follows 
that $G_1\neq 0$ and consequently $\alpha_1$ is an algebraic integer. 
Contradiction. 

Suppose that we are in case (c): $\lvert \alpha_1\rvert>1$, $\lvert 
\alpha_j\rvert>1$, $\sigma(\alpha_1)=\alpha_j$, $\sigma(F_1)\neq F_j$. Then 
$H_1=G_1$, $H_j=G_j$. It follows that $\sigma(H_1)=H_j$. Let $i$ be the 
largest integer such that $\sigma(\xi_{1,i})\neq \xi_{j,i}$. Then 
$\sigma(c_{1,i})\neq c_{j,i}$ by \eqref{eq:cj}. Contradiction. 

Suppose that we are in case (d): $\lvert \alpha_1\rvert\ge 1$, $\lvert 
\alpha_j\rvert\ge 1$, $\sigma(\alpha_1)=\alpha_j$, $\deg(\sigma(F_1)- F_j)\ge 
1$. Then $\deg(H_1-G_1)\le 0$, $\deg(H_j-G_j)\le 0$, and consequently  
$\deg(\sigma(H_1)-H_j)\le 0$. Let $i\ge 1$ be the largest integer such that 
$\sigma(\xi_{1,i})\neq \xi_{j,i}$. Then $\sigma(c_{1,i})\neq c_{j,i}$ by 
\eqref{eq:cj}. Contradiction. 

It remains to prove the last assertion of Proposition \ref{p:recur}. Without 
changing $R$, the above still holds for $k\ge k_0$. We are under condition 
(e). In particular, $\lvert \alpha_1\rvert >1$, $\lvert \alpha_j\rvert \ge 
1$, $\sigma(\alpha_1)=\alpha_j$. Then $H_1=G_1$. Moreover, using the argument in case (d), 
we have $\deg(\sigma(F_1)- F_j)\le 0$. That is, $\sigma(\xi_{1,i})=\xi_{j,i}$ 
for all $i\ge 1$. Thus, 
\[H_j-G_j=H_j-\sigma(H_1)=Q(\alpha_j)(\xi_{j,0}-\sigma(\xi_{1,0}))=Q(\alpha_j)(F_j-\sigma(F_1))\] 
by \eqref{eq:cj}. Write 
$S(x):=(P(x)/(x-\alpha_j))^{f+1}=\sum_{i=0}^{s}w_ix^i$. By \eqref{eq:yk}, for 
$k\ge k_0$, we have 
\[\sum_{i=0}^{s} w_i \tilde y_{k+i} =\sum_{i=0}^s w_i (H_j-G_j)\alpha_j^{k+i}=\alpha_j^k S(\alpha_j)Q(\alpha_j)(F_j-\sigma(F_1)).\]
Thus $L(SQ)\cdot \limsup_{k\to \infty} \lvert y_k\rvert \ge \lvert 
S(\alpha_j)Q(\alpha_j)\rvert \cdot \lvert F_j-\sigma(F_1)\rvert$. It suffices 
to take $N=\min_\alpha \lvert R_{\alpha}(\alpha)\rvert/L(R_\alpha)$, where 
$\alpha$ runs through roots of $R$, $R_\alpha(x)=R(x)/(x-\alpha)^{n_\alpha}$, 
and $n_\alpha$ denotes the order of $R(x)$ at $x=\alpha$. 
\end{proof}

The following lemma is a consequence of the Morse--Hedlund theorem on subword 
complexity \cite[Theorem 7.3]{MH}. The case $M=1$ of the lemma is 
\cite[Lemma~2]{Dubickas} or \cite[Corollary A.4]{Bugeaud-2012}. We say that a 
word $v_1\dots v_e$ occurs in a word $w_1w_2\dots$ at position $n$ if 
$w_{n+i-1}=v_i$ for all $1\le i\le e$. 

\begin{lemma}\label{l:Morse}
Let $W=w_1w_2\dots$ be an infinite word over a finite alphabet $S$ that is 
not ultimately periodic. Let $e\ge 0$ and $M\ge 1$ be integers. Then there 
exist words $U,V$ of length $e$, letters $s,s',t,t'\in S$, and congruence 
classes $\bar m, \bar n$ modulo $M$ such that $s\neq s'$, $t\neq t'$, and the 
words $sU$ and $s'U$ both occur in $W$ at infinitely many positions in $\bar 
m$, and the words $Vt$ and $Vt'$ both occur in $W$ at infinitely many 
positions in $\bar n$. 
\end{lemma}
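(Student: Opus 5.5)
We will derive Lemma \ref{l:Morse} from the Morse--Hedlund theorem, which says that if $W$ is not ultimately periodic, then for every $e\ge 0$ the number $p(e)$ of distinct factors of length $e$ of $W$ satisfies $p(e+1)>p(e)$. We apply it not to $W$ itself but to a tail of $W$, and we incorporate the congruence condition modulo $M$ by recoding. First we discard finitely many letters: replace $W$ by a tail $W'=w_Nw_{N+1}\dots$ such that every factor occurring in $W'$ occurs in $W$ infinitely often. Such an $N$ exists because there are only finitely many words of length $\le e+1$. The tail $W'$ is still not ultimately periodic.

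To handle the residue classes, form the word $\hat W$ over the finite alphabet $S\times\Z/M\Z$ whose $k$-th letter is $(w_k,\bar k)$. Then $\hat W$ is not ultimately periodic, since its projection $W$ is not. Again pass to a tail $\hat W'$ in which every factor of length $\le e+1$ occurs infinitely often. A factor of $\hat W$ is the same thing as a factor $v$ of $W$ together with the residue class of one of its starting positions, so factors occurring infinitely often in $\hat W$ correspond exactly to pairs (word, class) such that the word occurs infinitely often at positions in that class.

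For the left extensions we count factors of $\hat W'$. Every factor of length $e$ extends to the right within the infinite word $\hat W'$, so the map "delete the last letter" from factors of length $e+1$ to factors of length $e$ is surjective. The same holds for the map "delete the first letter", provided every factor of length $e$ of the tail also occurs at a later position; this is guaranteed by our choice of tail. By Morse--Hedlund applied to $\hat W'$, the number of factors of length $e+1$ exceeds the number of length $e$. Hence some factor $\hat U$ of length $e$ has two distinct left extensions $a\hat U$ and $a'\hat U$. Because the residue-class component of the letter preceding $\hat U$ is determined by the class of $\hat U$'s starting position, the two letters $a\neq a'$ must differ in their $S$-component. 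This gives $s\neq s'$ with $sU$ and $s'U$ occurring infinitely often at positions in a common class $\bar m$: the class of the first letter is forced to be the same. Symmetrically, counting right extensions via "delete the last letter" gives $V$, $t\neq t'$ and a class $\bar n$.

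The main point needing care is that Morse--Hedlund applies in the strict form $p(e+1)>p(e)$ for the non-ultimately-periodic word $\hat W'$, and that both deletion maps are surjective on factors. Right-deletion is automatic. Left-deletion is where the choice of tail, with every short factor recurring, is used. The case $e=0$ is consistent: it just says two distinct letters each occur infinitely often in a common class, which holds since a word using only one letter eventually in each class would be ultimately periodic.
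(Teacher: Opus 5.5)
Your argument is correct and is essentially the paper's: lift $W$ to the alphabet $S\times\Z/M\Z$, find a length-$e$ factor with two distinct left (resp.\ right) extensions each occurring infinitely often, and note that for $e\ge 1$ the residue component of the extending letter is forced, so the two letters differ in $S$; you treat $e=0$ directly, as the paper does. The only difference is that you derive the extension statement from Morse--Hedlund via a tail and pigeonhole, whereas the paper cites \cite[Corollary A.4]{Bugeaud-2012}; incidentally, surjectivity of the deletion maps is not needed, since $p(e+1)>p(e)$ alone forces non-injectivity, and the tail is needed only to make the two extensions occur infinitely often.
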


\begin{proof}
The case $e=0$ follows from the assumption that $W$ is not ultimately 
periodic. Assume $e\ge 1$. We lift $W$ to a word $\tilde W=\tilde w_1\tilde 
w_2\dots$ over the alphabet $A=S\times (\Z/M\Z)$, where $\tilde w_i=(w_i,\bar 
i)$. Here $\bar i$ denotes the congruence class of $i$ modulo $M$. Then 
$\tilde W$ is not ultimately periodic. Applying \cite[Corollary 
A.4]{Bugeaud-2012} to $\tilde W$, we get words $\tilde U$, $\tilde V$ of 
length $e$ over $A$ and letters $\tilde s,\tilde s',\tilde t,\tilde t'\in A$ 
such that $\tilde s\neq \tilde s'$, $\tilde t\neq \tilde t'$, and each of the 
words $\tilde s\tilde U$, $\tilde s'\tilde U$, $\tilde V\tilde t$, $\tilde 
V\tilde t'$ occur in $\tilde W$ at infinitely many positions. Write $\tilde 
s=(s,\bar m)$,  $\tilde s'=(s',\bar m')$, $\tilde t=(t,\bar l)$, $\tilde 
t'=(t',\bar l')$. Since $\tilde s\tilde U$ and $\tilde s'\tilde U$ occur both 
in $\tilde W$, we have $\bar m=\bar m'$ and consequently $s\neq s'$. 
Similarly, $\bar l=\bar l'$ and $t\neq t'$. It then suffices to take $\bar 
n=\bar l-\bar e$ and $U$, $V$ to the words obtained from $\tilde U$, $\tilde 
V$ by applying the projection $A\to S$.  
\end{proof}

The following lemma will be used in the proof of Theorem \ref{p:Dubickas}. 
The case $M=1$ and $s_k=\tilde y_k$ of the lemma is the second part of 
\cite[Lemma~3]{Dubickas}. Following \cite[Definition~1.9]{Astorg-Boc 
Thaler-2024}, we say that a sequence $(x_{k})_{k \geqslant 1}$ 
\emph{converges to a cycle} if there exists an integer $M\ge 1$ such that the 
subsequence $(x_{kM + i})_{k \geqslant 1}$ converges for each $0 \leqslant i 
< M$.

\begin{lemma}\label{l:Dubickas}
Let $P(x)=\sum_{i=0}^d a_i x^i\in \R[x]$ be a nonzero polynomial and let 
$(y_k)_{k\ge 1}$ be a bounded sequence of real numbers.  Let $\tilde 
y_k=\sum_{i=0}^d a_i y_{k+i}$ for every $k\ge 1$. Assume there exists a 
sequence $(s_k)_{k\ge 1}$ taking values in a finite set $S$ of real numbers 
such that $(s_k)_{k\ge 1}$ is not ultimately periodic and $(\tilde 
y_k-s_k)_{k\ge 1}$ converges to a cycle. Let $M\ge 1$ be an integer and 
$\delta=\min_{\substack{k\equiv k'\pmod{M}\\s_k\neq s_{k'}}} \lvert 
s_k-s_{k'}\rvert$. Then there exists an integer $l$ such that 
\[\limsup_{k\to \infty} y_{kM+l} -\liminf_{k\to \infty} y_{kM+l}\ge \delta/\ell(P).\]
\end{lemma}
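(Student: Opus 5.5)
We argue by contradiction, following Dubickas' argument for $M=1$ and using Lemma~\ref{l:Morse} to keep track of residue classes modulo $M$. Suppose that for every $l$ we have $\limsup_k y_{kM+l}-\liminf_k y_{kM+l}<\delta/\ell(P)$. Since $(y_k)$ is bounded, for each residue class $\bar l$ modulo $M$ the limit points of $(y_{kM+l})_k$ then lie in an interval $[A_{\bar l},B_{\bar l}]$ with $B_{\bar l}-A_{\bar l}\le D<\delta/\ell(P)$.

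First we reduce from $P$ to $PQ$. Fix $\varepsilon>0$ and choose $Q\in\R[x]$ with leading coefficient $1$ or constant coefficient $1$ such that $L(PQ)<\ell(P)+\varepsilon$. Write $Q(x)=\sum_{j=0}^{q} b_jx^j$, $PQ=\sum_i c_ix^i$, and set
\[
Y_k=\sum_{i}c_iy_{k+i}=\sum_{j=0}^{q} b_j\tilde y_{k+j}.
\]
Let $t_k=\tilde y_k-s_k$. Since $(t_k)$ converges to a cycle, pass to a common period $M'$ that is a multiple of $M$ and of the cycle length. We then have $Y_k=\sum_j b_js_{k+j}+T_k+o(1)$, where $T_k=\sum_j b_j t_{k+j}$ depends, up to $o(1)$, only on $k\bmod M'$.

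Suppose $Q$ has constant coefficient $b_0=1$. Apply Lemma~\ref{l:Morse} with modulus $M'$ and $e=q$ to the non-ultimately-periodic word $(s_k)$. This gives a word $U$ of length $q$, letters $s\ne s'$, and a class $\bar m$ mod $M'$ such that $sU$ and $s'U$ both occur infinitely often at positions $k\in\bar m$. At these two families of positions, all terms of $Y_k$ coincide up to $o(1)$ except $b_0s_k$. Hence
\[
|Y_k-Y_{k'}|\to|s-s'|\ge\delta
\]
along suitable $k,k'\in\bar m$. The inequality $|s-s'|\ge\delta$ holds because $s$ and $s'$ occur at positions congruent mod $M'$, hence mod $M$. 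If instead the leading coefficient $b_q=1$, use the words $Vt$, $Vt'$ from Lemma~\ref{l:Morse}, aligning the differing letter at position $k+q$.

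On the other hand, $Y_k-Y_{k'}=\sum_ic_i(y_{k+i}-y_{k'+i})$ with $k\equiv k'\pmod M$. For each $i$, both $y_{k+i}$ and $y_{k'+i}$ lie in the same residue class, so asymptotically $|y_{k+i}-y_{k'+i}|\le D+o(1)$. Therefore
\[
\limsup|Y_k-Y_{k'}|\le L(PQ)\,D<(\ell(P)+\varepsilon)D.
\]
Choosing $\varepsilon$ small enough that $(\ell(P)+\varepsilon)D<\delta$ gives a contradiction. Note that $l$ may be taken in $\{0,\dots,M-1\}$, or shifted to be nonnegative.

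The main obstacle is the bookkeeping of the $o(1)$ and cycle terms $T_k$. These must cancel exactly between positions $k$ and $k'$, which requires taking $k\equiv k'$ modulo the cycle period and not merely modulo $M$. Applying Lemma~\ref{l:Morse} with the refined modulus $M'$, while measuring $\delta$ with respect to $M$, handles this, since congruence mod $M'$ implies congruence mod $M$. A secondary point is that the bound on $|y_{k+i}-y_{k'+i}|$ only uses limit points within one residue class mod $M$, which is exactly what the contradiction hypothesis supplies.
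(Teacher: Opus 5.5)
Your proof is correct and follows the paper's argument. You choose $Q$ with $L(PQ)$ close to $\ell(P)$, apply Lemma~\ref{l:Morse} with a modulus refined to absorb the cycle period to get $sU$ and $s'U$ in one residue class, and compare $\sum_i r_i y_{k+i}$ at the two families of positions. The only difference is bookkeeping: you argue by contradiction via pairwise differences and keep $\delta$ measured modulo the original $M$. The paper instead replaces $M$ by a multiple outright, implicitly using that this can only increase $\delta$ and decrease the oscillations of subsequences, and then bounds through $\limsup$ and $\liminf$ directly.
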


\begin{proof}
We refine Dubickas' proof of \cite[Lemma~3]{Dubickas} using words with the 
help of Lemma \ref{l:Morse}. Let $t_k:=\tilde y_k-s_k$. Up to replacing $M$ 
by a multiple, we may assume that $(t_{kM+i})_{k\ge 1}$ converges for each 
$0\le i<M$. Let $\mu_{\bar i}=\limsup_{k\to \infty} y_{kM+i}$ and $\nu_{\bar 
i}=\liminf_{k\to \infty} y_{kM+i}$, which only depend on the congruence class 
$\bar i$ of $i$ modulo $M$. Let $\epsilon>0$. There exists a polynomial $Q\in 
\R[x]$ of leading coefficient $1$ or constant coefficient $1$ such that 
$L(PQ)<\ell(P)+\epsilon$. We will prove the case of constant coefficient $1$. 
The proof in the other case is completely parallel. Write $Q(x)=\sum_{i=0}^e 
b_i x^i$, where $b_0=1$. There exists $N\ge 1$ such that for all $k,k'\ge N$ 
satisfying $k\equiv k'\pmod M$, we have $y_k\in (\nu_{\bar 
k}-\epsilon,\mu_{\bar k}+\epsilon)$ and $\lvert t_k-t_{k'}\rvert 
<\epsilon/L(Q)$. We consider the infinite word $W=s_1s_2\dotsm$ over the 
alphabet $S$. We let $W_{k,k+e}=s_k\dotsm s_{k+e}$ denote the subword of $W$ 
of length $e+1$. By Lemma \ref{l:Morse}, there exist a word $V=v_1\dotsm v_e$ 
over $S$ of length $e$, letters $u_1,u_2\in S$, and a congruence class $\bar 
n$ modulo $M$ such that $u_1\neq u_2$ and the sets $K_j=\{k\in \bar n\mid 
W_{k,k+e}=u_jV\}$ $(j=1,2)$ are infinite. We may assume $u_1>u_2$. Choose a 
representative $n\ge N$ of $\bar n$. Let $c=\sum_{i=1}^e b_iv_i$ and 
$c'=\sum_{i=0}^e b_it_{n+i}$. For $k\in K_j$, $j=1,2$, we have $\sum_{i=0}^e 
b_i s_{k+i}=u_j+c$. For $k$ in $\bar n$ satisfying $k\ge N$, we have $\lvert 
\sum_{i=0}^e b_i t_{k+i}-c'\rvert <\epsilon$. Write $PQ(x)=\sum_{i=0}^{d+e} 
r_i x^i$. Let $I_+$ and $I_-$ denote the set of indices $i$ such that $r_i>0$ 
and $r_i<0$, respectively. Choosing any $k\in K_1$ satisfying $k\ge N$, we 
get 
\begin{equation}\label{eq:1}
\sum_{i\in I_+} r_i(\mu_{\overline{i+n}}+\epsilon)- 
\sum_{i\in I_-} r_i(\nu_{\overline{i+n}}-\epsilon)>u_1+c+c'-\epsilon.
\end{equation} 
Choosing any $k\in K_2$ satisfying $k\ge N$, we get 
\begin{equation}\label{eq:2} 
\sum_{i\in I_+} r_i(\nu_{\overline{i+n}}-\epsilon)- 
\sum_{i\in I_-} r_i(\mu_{\overline{i+n}}+\epsilon)<u_2+c+c'+\epsilon.
\end{equation}
Subtracting \eqref{eq:2} from \eqref{eq:1}, we get
\[(\ell(P)+\epsilon)(\Delta+2\epsilon)>L(PQ)(\Delta+2\epsilon) > u_1-u_2-2\epsilon\ge \delta-2\epsilon,\]
where $\Delta=\max_{\bar i}(\mu_{\bar i}-\nu_{\bar i})$. Since $\epsilon>0$ 
is arbitrary, we get $\ell(P)\Delta\ge \delta$. 
\end{proof}

\begin{proof}[Proof of Theorem \ref{p:Dubickas}]
For \eqref{eq:pD1} it suffices to show that for every decomposition $R=PQ$ in 
$\Z[x]$ with $P=R/Q$ as in Definition \ref{d:reduced}, we have 
\begin{equation}\label{eq:pD}
\max_{0\le l<M} \left(\limsup_{k\to \infty}y_{kM+l}-\liminf_{k\to \infty}y_{kM+l}\right)\ge 1/\ell(P).
\end{equation}
Write $P(x)=\sum_{i=0}^d a_i x^i$ and $Q(x)=\sum_{i=0}^e b_ix^i$. Let 
\[t_k=\sum_{i=0}^d 
a_ix_{k+i},\quad\tilde y_k=\sum_{i=0}^d 
a_iy_{k+i},\quad -s_k=\sum_{i=0}^d a_i z_{k+i}.
\] 
Then $\sum_{i=0}^{e}b_it_{k+i}=0$ for all $k\ge 1$. By the assumption about 
the roots of $Q$, it follows that $(t_k)_{k\ge 1}$ converges to a cycle of 
period $m\ge 1$, where $m$ is the least common multiple of the orders of the 
roots of unity that are roots of $Q$. (By convention $m=1$ if there are no 
such roots of unity.) Up to replacing $M$ by a multiple, we may assume that 
$m$ divides $M$. Then $t_k=\tilde y_k -s_k$ and $(\tilde y_k)_{k\ge 1}$ is 
bounded. Thus $(s_k)_{k\ge 1}$ is a bounded sequence of integers. Since 
$(z_k)_{k\ge 1}$ is not a linear recurrent sequence, neither is $(s_k)_{k\ge 
1}$. In particular, $(s_k)_{k\ge 1}$ is not ultimately periodic. This implies 
\eqref{eq:pD} by Lemma \ref{l:Dubickas}. 

Let
\[\hat y_k=\sum_{i=0}^r 
u_iy_{k+i},\quad \hat z_k=\sum_{i=0}^r u_i z_{k+i}.
\] 
Then $\hat y_k+\hat z_k=\sum_{i=0}^r u_ix_{k+i}=0$. Thus $\hat y_k=-\hat 
z_k\in \Z$ and $(\hat y_k)_{k\ge 1}$ is not a linear recurrent sequence. In 
particular, for every $N\ge 1$, there exists $n\ge N$ such that 
\[1\le \lvert \hat y_n\rvert \le L(R)\sup_{k\ge n}\lvert y_k\rvert.\] 
Letting $N\to \infty$, we get 
\[1\le L(R)\limsup_{k\to \infty} \lvert 
y_k\rvert.\qedhere\] 
\end{proof}

\begin{proof}[Proof of Theorem \ref{t:Dubickas}]
The inequality \eqref{eq:Dubickas2} follows from Proposition \ref{p:recur} 
and Theorem \ref{p:Dubickas} applied to the decomposition 
$x_k=(\{x_k+1/2\}-1/2)+\lfloor x_k+1/2\rfloor$. Indeed, $\lVert 
x\rVert=\lvert \{x+1/2\}-1/2\rvert$. Assume that for each congruence class 
$\bar l$ modulo $M$, there exists an interval $I_{\bar l} $ of $\R/\Z$ of 
length $<1/\lambda(R)\le 1$ such that the set of limit values of 
$(\pi(x_{kM+l}))_{k\ge 1}$ is contained in $I_{\bar l}$. For each $\bar l$, 
choose $\eta_{\bar l}\in \R$ such that $\pi(\eta_{\bar l})\notin I_{\bar l}$. 
Proposition \ref{p:recur} and Theorem \ref{p:Dubickas} applied to the 
decomposition $x_k=(\{x_k-\eta_{\bar k}\}+\eta_{\bar k})+\lfloor 
x_k-\eta_{\bar k}\rfloor$ then yield a contradiction. 

It remains to show that $E$ is an infinite set. Suppose that $E$ is finite. 
Let $\epsilon=1/L(R)$. If we have $i$, $j$, $\sigma$ in case (e), then, up to 
multiplying $x_k$ by a positive integer, we may assume that $N\cdot \lvert 
\sigma(F_i)-F_j\rvert>1$, where $N>0$ is the constant in Proposition 
\ref{p:recur}. In all cases, by Kronecker's approximation theorem 
\cite[Theorem 1.18]{Bugeaud-2012}, up to multiplying $x_k$ by a positive 
integer, we may assume that $E$ is contained in 
$\pi((-\epsilon/2,\epsilon/2))$. Then there exists $k_0\ge 1$ such that for 
all $k\ge k_0$, $y_k:=\{x_k+1/2\}-1/2$ satisfies $\lvert y_k\rvert=\lVert 
x_k\rVert<\epsilon/2$. Let $z_k:=\lfloor x_k+1/2\rfloor$, 
\[\tilde y_k:=\sum_{m=0}^r u_m y_{k+m},\quad \tilde z_k:=\sum_{m=0}^r 
u_mz_{k+m}.
\] 
Then $\tilde y_k+\tilde z_k=\sum_{m=0}^r u_mx_{k+m}=0$. Thus $\tilde 
y_k=-\tilde z_k\in \Z$ and, for $k\ge k_0$, $\lvert \tilde y_k\rvert< 
L(R)\epsilon/2=1/2$, which forces $\tilde y_k=0$ and consequently $\tilde 
z_k=0$, contradicting Proposition \ref{p:recur}. (In case (e), Proposition 
\ref{p:recur} implies that $\limsup_{k\to \infty}\lvert y_k\rvert >1$, which 
contradicts the trivial bound $\lvert y_k\rvert\le 1/2$.) 
\end{proof} 

\begin{proof}[Proof of Corollary \ref{c:iff}]
The necessity of the conditions follows from Theorem \ref{t:Dubickas}. 
Indeed, if (1) does not hold, then (a) holds. If (2) does not hold, then (e) 
holds. If (3) does not hold, then (d) holds. (In the last two cases, if 
$\sigma(\alpha_i)$ does not appear in $\alpha_1,\dots,\alpha_n$, we take 
$\alpha_{n+1}=\sigma(\alpha_i)$ and $F_{n+1}=0$ in Theorem \ref{t:Dubickas}.) 

Conversely, assume that (1), (2), and (3) are satisfied. Note that if all 
Galois conjugates of an algebraic number $\alpha\in \CC$ have modulus $\le 1$ 
and one of them, say $\beta$, has modulus~$1$, then all of them have modulus 
$1$. Indeed, since $\bar\beta=\beta^{-1}$ is a Galois conjugate of~$\beta$, 
the minimal polynomial $P(x)\in \Q[x]$ of $\alpha$ is self-reciprocal, i.e. 
$x^{\deg(P)}P(1/x)=P(0)P(x)$. Thus the set of Galois conjugates of $\alpha$ 
is stable under the map $x\mapsto 1/x$. It follows that every algebraic 
number $\alpha\in \CC$ falls into one of the following three mutually 
exclusive categories: 
\begin{enumerate}[(i)]
\item $\alpha$ has a Galois conjugate of modulus $>1$; or
\item All Galois conjugates of $\alpha$ have modulus $1$; or
\item All Galois conjugates of $\alpha$ have modulus $<1$.
\end{enumerate}
Consequently conditions (1), (2), (3) imply that $(x_k)_{k\ge 1}$ is a finite 
sum of sequences of the following types: 
\begin{enumerate}[(A)]
\item $(\sum_\sigma \sigma (F(k)\alpha^k))_{k\ge 1}$, where $\alpha\in \CC$ 
    is an algebraic integer, $F(x)\in \Q(\alpha)[x]$, and $\sigma$ runs 
    through embeddings $\Q(\alpha)\to \CC$; 
\item $(\xi\zeta^k)_{k\ge 1}$, where $\xi\in \CC$ and $\zeta$ is a root of 
    unity; 
\item $(F(k)\alpha^k)_{k\ge 1}$, where $F(x)\in \CC[x]$ and $\lvert 
    \alpha\rvert <1$. 
\end{enumerate}
It suffices to show that for $(x_k)_{k\ge 1}$ of one of the three types, the 
set $E\subseteq \CC/(\Z[\sqrt{-1}])$ of limit values of 
$(\pi_{\CC}(x_k))_{k\ge 1}$ is finite, where $\pi_{\CC}\colon \CC\to 
\CC/(\Z[\sqrt{-1}])$ is the projection. The finiteness of $E$ is trivial for 
types (B) and (C). For type (A), up to multiplication by a positive integer, 
we may assume that $F(x)\in \Z[\alpha][x]$. Then $(x_k)_{k\ge 1}$ is a 
sequence of integers and the finiteness of $E$ is again trivial. 
\end{proof}

\begin{proof}[Proof of Corollary \ref{c:1}]
By Corollary \ref{c:iff}, $E$ is finite if and only if $\lvert\alpha\rvert$ 
is a Pisot number and $F(x)\in \Q(\alpha)[x]$ (conditions (1) and (2)) and 
$G(x)-G(0)\in \Q[x]$ (condition (3)). The last assertion follows from 
Proposition \ref{prop} and Remark \ref{r:Spec}, and Example \ref{ex:adm}(1). 
\end{proof}

\begin{proof}[Proof of Corollary \ref{c:2}]
The inequality \eqref{eq:c2} and the assertion on $(\pi(x_{kM+l}))_{k\ge 1}$ 
follow from Theorem \ref{t:Dubickas} applied to 
$R(x)=(x-1)^{g+1}P(x)^{f+1}Q(x)$ and to $R(x)=(x-1)^{g+1}P(x)^{f+1}$, 
respectively. Indeed, case (1) follows from cases (a) and (c), case (2) 
follows from case (c$'$), case (3) follows from case (d$'$), and case (4) 
follows from case (d). Note that
\[\lambda((x-1)^{g+1}P(x)^{f+1})\ge 
\ell((x-1)^{\max(g,0)}P(x)^{f+1}).\] 
The last assertion follows from 
Proposition \ref{prop}, Remark \ref{r:Spec}, and Example \ref{ex:adm}(2) 
applied to the linear recurrence relation defined by the coefficients of 
$P^{f+1}Q$. 
\end{proof}

The following corollary of Theorem \ref{t:Dubickas} immediately implies 
Theorem \ref{t:rational}. 

\begin{cor}
Let $\xi_1,\dots,\xi_n$ be nonzero real numbers and let 
$p_1,\dots,p_n,q_1,\dots,q_n$ be integers such that $p_i>\lvert q_i\rvert 
>0$ for all $1\le i\le n$ and
$\frac{p_1}{q_1},\dots,\frac{p_n}{q_n}$ are pairwise distinct. Assume that 
$\xi_1$ is irrational or $p_1/q_1\notin \Z$. Let 
$x_k=\sum_{i=1}^n\xi_i(p_i/q_i)^k$ for $k\ge 1$. Then the set of limit values 
of $(\pi(x_k))_{k\ge 1}$ is an infinite set and for every integer $M\ge 1$, 
there exists an integer $l\ge 0$ such that the set of limit values of 
$(\pi(x_{kM+l}))_{k\ge 1}$ is not contained in any interval of $\R/\Z$ of 
length $<\frac{1}{p_1\dotsm p_n}$. 
\end{cor}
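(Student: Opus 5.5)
We apply Theorem \ref{t:Dubickas} with $\alpha_i=p_i/q_i$, $F_i=\xi_i$ (constant polynomials), and
\[R(x)=\prod_{i=1}^n (q_ix-p_i)\in \Z[x].\]
Each factor $q_ix-p_i$ kills the sequence $(\xi_i(p_i/q_i)^k)_{k\ge 1}$, so $R$ gives a linear recurrence with integer coefficients satisfied by $(x_k)_{k\ge 1}$. The $\alpha_i$ are pairwise distinct by hypothesis, and $\lvert\alpha_i\rvert=p_i/\lvert q_i\rvert>1$.

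Next we check that one of the conditions of Theorem \ref{t:Dubickas} holds.
\begin{itemize}
\item[(1)] If $p_1/q_1\notin\Z$, then $\alpha_1$ is a rational non-integer, hence not an algebraic integer. Since $F_1=\xi_1\neq 0$ and $\lvert\alpha_1\rvert>1$, condition (a) holds.
\item[(2)] If $\xi_1$ is irrational, choose a field automorphism $\sigma$ of $\CC$ with $\sigma(\xi_1)\neq\xi_1$. Such a $\sigma$ exists, since the fixed field of $\mathrm{Aut}(\CC)$ is $\Q$. Because $\alpha_1\in\Q$, we have $\sigma(\alpha_1)=\alpha_1$ while $\sigma(F_1)\neq F_1$. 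So condition (c) holds with $i=j=1$; this is the case (c$'$).
\end{itemize}
Theorem \ref{t:Dubickas} then says the set of limit values is infinite. It also says that for each $M$ some $l\ge 0$ makes the set of limit values of $(\pi(x_{kM+l}))_{k\ge 1}$ avoid every interval of length $<1/\lambda(R)$.

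It remains to prove $\lambda(R)\le p_1\dotsm p_n$. Take $Q=1$ in Definition \ref{d:reduced}; this gives $\lambda(R)\le \ell(R)$. In the definition of $\ell$, taking the multiplier with constant coefficient $1$ equal to $1$ gives $\ell(R)\le L(R)$. That bound is too weak, because $L(R)$ can be as large as $\prod(p_i+\lvert q_i\rvert)$. So we use the constant-coefficient-$1$ normalization instead. Since $R(0)=\prod(-p_i)$, the polynomial $R/R(0)=\prod_i(1-(q_i/p_i)x)$ has constant term $1$. For any real polynomial $T$ with constant term $1$, the product $R\cdot T$ is admissible in the definition of $\ell$. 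We take
\[T(x)=\frac{1}{R(0)}\prod_i \frac{1}{1-(q_i/p_i)x},\]
truncated to a polynomial of large degree $N$. Its constant term is $1/R(0)$, not $1$, so we rescale: write $T=T_0/R(0)$, where $T_0$ is the truncated product of geometric series, which has constant term $1$. Then
\[R\,T_0=R(0)\cdot\bigl(1+O((\max_i\lvert q_i\rvert/p_i)^N)\bigr)\]
coefficientwise, in the sense that the truncation error has $L^1$-norm tending to $0$ as $N\to\infty$. This uses $\lvert q_i\rvert/p_i<1$, which makes each geometric series $\sum_k (q_i/p_i)^k x^k$ absolutely summable.

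Hence $L(RT_0)\to\lvert R(0)\rvert=p_1\dotsm p_n$, so $\ell(R)\le p_1\dotsm p_n$, and therefore $\lambda(R)\le p_1\dotsm p_n$. Intervals of length $<1/(p_1\dotsm p_n)$ are then also of length $<1/\lambda(R)$, and this proves the corollary.

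The main obstacle is this length estimate. Concretely, we must verify that $L(RT_0)\to\lvert R(0)\rvert$: the product $R\,T_0$ equals $R(0)$ plus terms of degree $>N$, and we need their total absolute coefficient sum to tend to $0$. This follows because $R$ has fixed degree and the tails of the absolutely convergent series $\prod_i\sum_k (q_i/p_i)^kx^k$ tend to $0$ in $\ell^1$.
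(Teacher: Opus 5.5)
Your proof is correct and follows the paper's route. You use the same $R(x)=\prod_j(q_jx-p_j)$, the same cases (a) and (c$'$) of Theorem~\ref{t:Dubickas}, and the same bound $\lambda(R)\le\ell(R)\le p_1\dotsm p_n$. In fact the paper's auxiliary polynomials $R_m(x)=\prod_j\bigl(\frac{q_j^m}{p_j^{m-1}}x^m-p_j\bigr)$ are exactly $R$ times your product of geometric series, each truncated at degree $m-1$. This gives the explicit bound $L(R_m)\le\prod_j\bigl(p_j+\lvert q_j\rvert^m/p_j^{m-1}\bigr)$ and avoids the tail estimate.
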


\begin{proof}
This follows from cases (a) (if $p_1/q_1\notin \Z$) and (c$'$) (if $\xi_1$ is 
irrational) of Theorem \ref{t:Dubickas} applied to 
$R(x)=\prod_{j=1}^n(q_jx-p_j)$. Since $R(x)$ divides 
$R_m(x)=\prod_{j=1}^n(\frac{q_j^m}{p_j^{m-1}}x^m-p_j)$ in $\R[x]$ for all 
$m\ge 1$ and $\lim_{m\to \infty} L(R_m)= p_1\dotsm p_n$, we have 
$\lambda(R)\le \ell(R)\leqslant \inf_{m\geqslant1}L(R_m)=p_1\dotsm p_n$. 
\end{proof}

\begin{remark}
In the proof above we have in fact $\ell(R)=p_1\dotsm p_n$. Indeed, by 
\cite[Proposition (ii)]{Schinzel}, $\ell(R)\ge M(R)=p_1\dotsm p_n$, where 
$M(R)$ denotes the Mahler measure of~$R$. 
\end{remark}

\section{Discussion}\label{s:final}

The {\em spectrum} of a sequence $(x_k)_{k\geqslant 1}$ of real numbers is 
the set of $\theta\in\R/\Z$ such that the sequence 
$(\pi(x_k)-k\theta)_{k\geqslant 1}$ is \emph{not} uniformly distributed on 
$\R/\Z$. We let ${\sf Spec}(x_\bullet)$ denote the spectrum of 
$(x_k)_{k\geqslant 1}$. Mend\`es France \cite[p.~7-05, 
Problem~6*]{France-1973} asked the following question (also mentioned in 
\cite[Problem 10.4]{Bugeaud-2012}): For real numbers $\xi\neq 0$ and 
$\alpha>1$, is the spectrum of the sequence $(\xi \alpha^k)_{k\geqslant 1}$ 
at most countable? 

While this question remains open, we have a better understanding for the 
following variants of the notion of spectrum. 

\begin{definition}
For a sequence of real numbers $(x_k)_{k\geqslant 1}$, define
\[
\aligned
{\sf Spec_{fin}}(x_\bullet)&:=\bigl\{\theta\in\R/\Z\mid(\pi(x_k)-k\theta)_{k\geqslant 1}~\text{has finitely many limit values}\bigr\},\\
{\sf Spec_{count}}(x_\bullet)&:=\bigl\{\theta\in\R/\Z\mid(\pi(x_k)-k\theta)_{k\geqslant 1}~\text{has countably many limit values}\bigr\}.
\endaligned
\]
\end{definition}

Obviously ${\sf Spec_{fin}}(x_\bullet)\subseteq{\sf 
Spec_{count}}(x_\bullet)\subseteq{\sf Spec}(x_\bullet)$.

With this notation, the case $G(x)=-\theta x$ of Corollary \ref{c:1} can be 
stated as follows. 

\begin{cor}
Let $\alpha$ be a real algebraic number with $\lvert\alpha\rvert>1$ and let 
$F(x)\in \R[x]$ be nonzero. Let $x_k=F(k)\alpha^k$. Then 
\[{\sf Spec_{fin}}(x_\bullet)=\begin{cases}
\Q/\Z & \text{$\lvert \alpha\rvert$ is a Pisot number and $F(x)\in \Q(\alpha)[x]$}\\
\emptyset & \text{otherwise.}  
\end{cases}
\]
Moreover, ${\sf Spec_{count}}(x_\bullet)\subseteq\Q/\Z$.
\end{cor}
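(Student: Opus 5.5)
The plan is to derive this from Corollary \ref{c:1} applied with $G(x)=-\theta x$, and to handle the countability statement with Proposition \ref{prop}. The key observation is that, for $\theta\in\R$ and $\bar\theta=\pi(\theta)$, the sequence $(\pi(x_k)-k\bar\theta)_{k\ge 1}$ coincides with $(\pi(F(k)\alpha^k+G(k)))_{k\ge 1}$ where $G(x)=-\theta x$. Here $G(x)-G(0)=-\theta x$ lies in $\Q[x]$ exactly when $\theta\in\Q$. This condition depends only on $\bar\theta$, so the choice of lift does not matter.

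First step. Corollary \ref{c:1} says $\bar\theta\in{\sf Spec_{fin}}(x_\bullet)$ if and only if three things hold: $\lvert\alpha\rvert$ is a Pisot number, $F\in\Q(\alpha)[x]$, and $\theta\in\Q$. There are two cases.
\begin{itemize}
\item If the first two conditions hold, then ${\sf Spec_{fin}}(x_\bullet)=\{\bar\theta:\theta\in\Q\}=\Q/\Z$.
\item If either of them fails, no $\theta$ qualifies, so ${\sf Spec_{fin}}(x_\bullet)=\emptyset$.
\end{itemize}

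Second step. For ${\sf Spec_{count}}(x_\bullet)\subseteq\Q/\Z$, take $\bar\theta\notin\Q/\Z$, so that $G(x)-G(0)\notin\Q[x]$. The last assertion of Corollary \ref{c:1} then says the set of limit values is uncountable, hence $\bar\theta\notin{\sf Spec_{count}}(x_\bullet)$.

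To be safe, I would also recall the mechanism behind that last assertion, namely Proposition \ref{prop} together with Remark \ref{r:Spec} and Example \ref{ex:adm}(1).
\begin{itemize}
\item The sequence $(F(k)\alpha^k)_{k\ge1}$ satisfies the recurrence given by $P^{f+1}$, where $P$ is the minimal polynomial of $\alpha$.
\item Since $\lvert\alpha\rvert>1$, $P(1)\neq 0$, so the order $\nu$ at $x=1$ is $0$.
\item Countable limit sets are closed countable subsets of $\R/\Z$, hence admissible by Example \ref{ex:adm}(1). Proposition \ref{prop} would then force $-\theta\in\Q$, which is a contradiction.
\end{itemize}

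There is no real obstacle. The only care needed is the bookkeeping between $\theta\in\R/\Z$ and its real lifts, which is routine because rationality is invariant under integer shifts.
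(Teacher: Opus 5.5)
Your proposal is correct and matches the paper, which obtains this corollary directly as the case $G(x)=-\theta x$ of Corollary~\ref{c:1}; this uses that $G(x)-G(0)=-\theta x$ lies in $\Q[x]$ exactly when $\theta\in\Q$, a condition that does not depend on the choice of lift. Your recap of why irrational $\theta$ gives uncountably many limit values is the same argument the paper uses for the last assertion of Corollary~\ref{c:1}: Proposition~\ref{prop}, Remark~\ref{r:Spec} and Example~\ref{ex:adm}(1), with $\nu=0$ because $P(1)\neq 0$.
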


\subsection*{Acknowledgements} We thank Shaoshi Chen for useful discussions. 

Zhangchi Chen was supported by the National Key R\&D Program of China (No.\ 
2025YFA1018300), the National Natural Science Foundation of China (No.\ 
12501104), the Science and Technology Commission of Shanghai Municipality 
(No.\ 22DZ2229014), the Shanghai Sailing Program (No.\ 24YF2709900), and the 
Shanghai Pujiang Program (No.\ 24PJA023). Weizhe Zheng was supported by the 
National Natural Science Foundation of China (grant numbers 12125107, 
12288201) and the Chinese Academy of Sciences Project for Young Scientists in 
Basic Research (grant number YSBR-033).


\begin{bibdiv}
\begin{biblist}
\bib{Astorg-Boc Thaler-2024}{article}{
   author={Astorg, Matthieu},
   author={Boc Thaler, Luka},
   title={Dynamics of skew-products tangent to the identity},
   journal={J. Eur. Math. Soc. (JEMS)},
   volume={28},
   date={2026},
   number={2},
   pages={559--618},
   issn={1435-9855},
   review={\MR{5031425}},
   doi={10.4171/jems/1566},
}

\bib{Bugeaud-2012}{book}{
   author={Bugeaud, Yann},
   title={Distribution modulo one and Diophantine approximation},
   series={Cambridge Tracts in Mathematics},
   volume={193},
   publisher={Cambridge University Press, Cambridge},
   date={2012},
   pages={xvi+300},
   isbn={978-0-521-11169-0},
   review={\MR{2953186}},
   doi={10.1017/CBO9781139017732},
}

\bib{CYZ}{article}{ 
   author={Chen, Zhangchi},
   author={Ye, Zihao},
   author={Zheng, Weizhe},
   title={On a question of {A}storg and {B}oc {T}haler},
   date={2026},
   note={arXiv:2511.21324},
} 

\bib{Dubickas-BLMS}{article}{
   author={Dubickas, Art\=uras},
   title={Arithmetical properties of powers of algebraic numbers},
   journal={Bull. London Math. Soc.},
   volume={38},
   date={2006},
   number={1},
   pages={70--80},
   issn={0024-6093},
   review={\MR{2201605}},
   doi={10.1112/S0024609305017728},
}

\bib{Dubickas}{article}{
   author={Dubickas, Art\=uras},
   title={On the distance from a rational power to the nearest integer},
   journal={J. Number Theory},
   volume={117},
   date={2006},
   number={1},
   pages={222--239},
   issn={0022-314X},
   review={\MR{2204744}},
   doi={10.1016/j.jnt.2005.07.004},
}

\bib{Dubickas-2007}{article}{
   author={Dubickas, Art\=uras},
   title={Arithmetical properties of linear recurrent sequences},
   journal={J. Number Theory},
   volume={122},
   date={2007},
   number={1},
   pages={142--150},
   issn={0022-314X},
   review={\MR{2287116}},
   doi={10.1016/j.jnt.2006.04.002},
}

\bib{DN}{article}{
   author={Dubickas, Art\=uras},
   author={Novikas, Aivaras},
   title={Integer parts of powers of rational numbers},
   journal={Math. Z.},
   volume={251},
   date={2005},
   number={3},
   pages={635--648},
   issn={0025-5874},
   review={\MR{2190349}},
   doi={10.1007/s00209-005-0827-4},
}

\bib{FLP}{article}{
   author={Flatto, Leopold},
   author={Lagarias, Jeffrey C.},
   author={Pollington, Andrew D.},
   title={On the range of fractional parts $\{\xi(p/q)^n\}$},
   journal={Acta Arith.},
   volume={70},
   date={1995},
   number={2},
   pages={125--147},
   issn={0065-1036},
   review={\MR{1322557}},
   doi={10.4064/aa-70-2-125-147},
}


\bib{GTT}{article}{
   author={Grekos, Georges},
   author={Toma, Vladim\'ir},
   author={Tomanov\'a, Jana},
   title={A note on uniform or Banach density},
   journal={Ann. Math. Blaise Pascal},
   volume={17},
   date={2010},
   number={1},
   pages={153--163},
   issn={1259-1734},
   review={\MR{2674656}},
}

\bib{Hardy}{article}{
   author={Hardy, Godfrey Harold},
   title={A problem of Diophantine approximation},
   journal={J. Indian Math. Soc.},
   volume={11},
   date={1919},
   pages={162--166},
}

\bib{Hlawka0}{article}{
   author={Hlawka, Edmund},
   title={Zur formalen Theorie der Gleichverteilung in kompakten Gruppen},
   language={German},
   journal={Rend. Circ. Mat. Palermo (2)},
   volume={4},
   date={1955},
   pages={33--47},
   issn={0009-725X},
   review={\MR{0074489}},
   doi={10.1007/BF02846027},
}

\bib{Hlawka}{article}{
   author={Hlawka, Edmund},
   title={Erbliche Eigenschaften in der Theorie der Gleichverteilung},
   language={German},
   journal={Publ. Math. Debrecen},
   volume={7},
   date={1960},
   pages={181--186},
   issn={0033-3883},
   review={\MR{0125103}},
   doi={10.5486/pmd.1960.7.1-4.16},
}

\bib{Lawton}{article}{
   author={Lawton, B.},
   title={A note on well distributed sequences},
   journal={Proc. Amer. Math. Soc.},
   volume={10},
   date={1959},
   pages={891--893},
   issn={0002-9939},
   review={\MR{0109818}},
   doi={10.2307/2033616},
}

\bib{Mahler}{article}{
   author={Mahler, K.},
   title={An unsolved problem on the powers of $3/2$},
   journal={J. Austral. Math. Soc.},
   volume={8},
   date={1968},
   pages={313--321},
   review={\MR{0227109}},
}

\bib{France-1973}{article}{
   author={Mend\`es France, Michel},
   title={Les ensembles de B\'esineau},
   language={French},
   conference={
      title={S\'eminaire Delange-Pisot-Poitou (15\`eme ann\'ee: 1973/74),
      Th\'eorie des nombres, Fasc. 1},
   },
   book={
      publisher={Secr\'etariat Math., Paris},
   },
   date={1975},
   pages={Exp. No. 7, 1--6},
   review={\MR{0412139}},
}


\bib{MH}{article}{
   author={Morse, Marston},
   author={Hedlund, Gustav A.},
   title={Symbolic Dynamics},
   journal={Amer. J. Math.},
   volume={60},
   date={1938},
   number={4},
   pages={815--866},
   issn={0002-9327},
   review={\MR{1507944}},
   doi={10.2307/2371264},
}

\bib{Petersen}{article}{
   author={Petersen, G. M.},
   title={`Almost convergence'\ and uniformly distributed sequences},
   journal={Quart. J. Math. Oxford Ser. (2)},
   volume={7},
   date={1956},
   pages={188--191},
   issn={0033-5606},
   review={\MR{0095812}},
   doi={10.1093/qmath/7.1.188},
}

\bib{Pisot}{article}{
   author={Pisot, Charles},
   title={La r\'epartition modulo 1 et les nombres alg\'ebriques},
   language={French},
   journal={Ann. Scuola Norm. Super. Pisa Cl. Sci. (2)},
   volume={7},
   date={1938},
   number={3-4},
   pages={205--248},
   issn={0391-173X},
   review={\MR{1556807}},
}

\bib{Pisot-1946}{article}{
   author={Pisot, Charles},
   title={R\'epartition $({\rm mod} 1)$ des puissances successives des
   nombres r\'eels},
   language={French},
   journal={Comment. Math. Helv.},
   volume={19},
   date={1946},
   pages={153--160},
   issn={0010-2571},
   review={\MR{0017744}},
   doi={10.1007/BF02565954},
}

\bib{Schinzel}{article}{
   author={Schinzel, Andrzej},
   title={On the reduced length of a polynomial with real coefficients},
   journal={Funct. Approx. Comment. Math.},
   volume={35},
   date={2006},
   pages={271--306},
   issn={0208-6573},
   review={\MR{2271619}},
   doi={10.7169/facm/1229442629},
}


\bib{Vija}{article}{
   author={Vijayaraghavan, Tirukkannapuram},
   title={On the fractional parts of the powers of a number. II},
   journal={Proc. Cambridge Philos. Soc.},
   volume={37},
   date={1941},
   pages={349--357},
   issn={0008-1981},
   review={\MR{0006217}},
}

\bib{Weyl}{article}{
   author={Weyl, Hermann},
   title={\"Uber die Gleichverteilung von Zahlen mod. Eins},
   language={German},
   journal={Math. Ann.},
   volume={77},
   date={1916},
   number={3},
   pages={313--352},
   issn={0025-5831},
   review={\MR{1511862}},
   doi={10.1007/BF01475864},
}
\end{biblist}
\end{bibdiv}
\end{document}